\newtheorem{theorem}{Theorem}
\newtheorem{proposition}{Proposition}
\newtheorem{lemma}{Lemma}
\newtheorem{corollary}{Corollary}
\newtheorem*{proposition*}{Proposition}
\newtheorem{definition}{Definition}
\newtheorem{conjecture}{Conjecture}
\newtheorem{remark}{Remark}
\newcommand\myfootnote[1]{
\renewcommand{\thefootnote}{}
\footnotetext{#1}
\def\thefootnote{\@arabic\c@footnote}
}
\renewcommand{\subsection}{\@startsection{subsection}{2}{0mm}{-\baselineskip}{-5pt}{\it \bf}}
\title{Unital locally matrix algebras  and Steinitz numbers}
\author{Oksana Bezushchak and Bogdana Oliynyk }
\thanks{The second author was partially supported by the grant 346300 for IMPAN from the Simons Foundation and the matching 2015-2019 Polish MNiSW fund}
\begin{document}

\maketitle

\address{  Faculty of Mechanics and Mathematics,
	Kyiv National Taras Shevchenko University
	Volodymyrska, 64, Kyiv 01033, Ukraine \\ Department of Mathematics, National University
		of Kyiv-Mohyla Academy, Skovorody St. 2, Kyiv,
		04070, Ukraine                     }
                 
\email{bezusch@univ.kiev.ua, oliynyk@ukma.edu.ua}

\keywords{Keyword: locally matrix algebra, Steinitz number, diagonal embedding, Clifford algebra}               

\subjclass{2010}{Mathematics Subject Classification: 03C05, 03C60, 11E88}


\begin{abstract}
An   $F$-algebra  $A$ with unit $1$ is said to be a locally matrix algebra if an arbitrary finite collection of elements $a_1,$ $\ldots,$ $a_s $ from $ A$  lies in a subalgebra $B$ with $1$ of the algebra $A$, that is  isomorphic to a matrix algebra $M_n(F),$ $n\geq 1.$ To an arbitrary unital locally matrix algebra $A$ we assign a Steinitz number   $\mathbf{n}(A)$ and study a relationship between $\mathbf{n}(A)$ and $A$.

\end{abstract}




\section{ Introduction}

 Let $F$ be a ground  field. Throughout the  paper we consider associative $F$--algebras with $1$. We say that an algebra $A$ is a locally matrix algebra if an arbitrary finite collection of elements $a_1,$ $\ldots,$ $a_s \in A$  lies in a subalgebra $B$,  $1\in B \subset A$,  that is  isomorphic to a matrix algebra $M_n(F),$ $n\geq 1.$
 
 J.~G.~Glimm \cite{Glimm} parametrised  countable dimensional locally matrix algebras  (under the name  uniformly hyperfinite algebras) with Steinitz or supernatural numbers. O.~Bezushchak, B.~Oliynyk and V.~Sushchanskii \cite{Sushch2} extended their parametrisation to regular relation structures. The idea of diagonal embeddings  was introduced by A.~E.~Zalesskii  in \cite{Zall}. In a series of  papers     A.~A.~Baranov and A.~G.~Zhilinskii  used Steinitz numbers to classify diagonal locally simple Lie algebras of countable dimension \cite{Baranov2}, \cite{Baranov1}.
 
 In this paper we introduce a Steinitz number $\mathbf{n}(A)$ for a unital locally matrix algebra of arbitrary dimension and study the relation between  $ \mathbf{n}(A) $  and $A$. 
 
  In the section 3 we show that for  locally matrix algebras $A,$ $B$ of arbitrary dimensions universal theories of  $A,$  $B$  coincide if and only if $ \mathbf{n} (A) =\mathbf{n} (B)$. 
 
 In the section 4  we consider examples of unital  locally matrix algebras   and  find their Steinitz numbers.  Finally, in the section 4 we give example of nonisomorphic locally matrix algebras $A,$ $B$ of uncountable dimension such that $\mathbf{n} (A)=\mathbf{n} (B)=2^{\infty}.$

\section{Preliminaries}
 Let $n,m \ge 1$ and let $1 \in M_n(F) \subset M_m(F)$. Let $B \cong M_k(F)$ be the centralizer of $M_n(F)$ in $M_m(F)$. Then (see \cite{Jacobson}), $M_m \cong M_n(F) \otimes_F M_k(F)$, which implies m=nk.

Let $ \mathbb{P} $ be the set of all primes. A 
  {\it Steinitz } or {\it supernatural} number  is an infinite formal
product of the form
$$ \prod_{p\in \mathbb{P}} p^{r_p}, $$
where  $ r_p \in  \mathbb{N} \cup \{0,\infty\} $.
The product of two Steinitz numbers
$$ \prod_{p\in \mathbb{P}} p^{r_p} \ \text{ and } \  \prod_{p\in \mathbb{P}} p^{k_p}  $$
is a  Steinitz number
$$ \prod_{p\in \mathbb{P}} p^{r_p+k_p},  $$
where we assume, that $k_p \in  \mathbb{N} \cup \{0,\infty\}$,   $t+\infty=\infty+t=\infty+\infty=\infty$ for all positive integers $t$. 
Denote by $ \mathbb{SN} $ the set of all Steinitz
numbers. Obviously, the set of all positive integers $ \mathbb{N} $ is a  subset of the set of all Steinitz numbers $ \mathbb{SN} $. The elements of the set $ \mathbb{SN} \setminus
\mathbb{N} $ are called {\it infinite Steinitz } numbers.

The Steinitz number $ v $ divides $ u $ if there exists $ w\in
\mathbb{SN} $, such that $ u=v\cdot w $. The divisibility relation
$ | $ makes $ \mathbb{SN} $ into a partially ordered set with the
greatest element   $$ I = \prod_{p\in P} p^\infty $$ and the least
element $1$. Moreover, the poset $(\mathbb{SN}, |)$ is a complete
lattice.

Steinitz  numbers were introduced by Ernst Steinitz \cite{ST} in 1910 to classify algebraic extensions of finite fields.

\begin{definition}
	\label{St_number}
Let $A$ be an infinite dimensional unital locally matrix algebra over a field $F$ and let $D(A)$ be  the  set of all positive integers $n$, such that  there is a subalgebra  $A'$, $A'\subseteq A$, $A' \cong  M_n(F)$ and   $ 1_A \in A'$, where  $ 1_A $ is the identity of $A$. 
Then the  least common multiple of the set $D(A)$ is called the {\it Steinitz or supernatural number } $\mathbf{n}(A)$ of the algebra $A$. 
\end{definition}
The following proposition is straightforward. 
\begin{proposition}
	\label{tensor}
	Let $A$ and $B$ be unital locally matrix algebras. Then the algebra $A \otimes_F B$ is a unital  locally matrix and  
	$$\mathbf{n}(A) \cdot \mathbf{n}(B)= \mathbf{n}(A \otimes_F B).$$
\end{proposition}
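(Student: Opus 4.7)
The plan is to prove the two statements in Proposition~\ref{tensor} separately: first, that $A \otimes_F B$ is a unital locally matrix algebra; second, that $\mathbf{n}(A \otimes_F B) = \mathbf{n}(A)\cdot\mathbf{n}(B)$, which I would split into two divisibility checks on Steinitz numbers.

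For the first claim, I would start with an arbitrary finite collection $c_1,\dots,c_s \in A\otimes_F B$ and write each one as a finite sum $c_i = \sum_j a_{ij}\otimes b_{ij}$. Applying the locally matrix property of $A$ to the finite set of all $a_{ij}$'s yields a unital subalgebra $A'\subseteq A$ with $A'\cong M_n(F)$ and $1_A\in A'$; similarly, the $b_{ij}$'s lie in some unital $B'\subseteq B$ with $B'\cong M_m(F)$. Then $A'\otimes_F B'\cong M_n(F)\otimes_F M_m(F)\cong M_{nm}(F)$ is a subalgebra of $A\otimes_F B$, it contains $1_A\otimes 1_B = 1_{A\otimes_F B}$, and it contains every $c_i$. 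This simultaneously verifies that $A\otimes_F B$ is locally matrix and provides the key auxiliary fact I will re-use below: every finite subset of $A\otimes_F B$ lies in a unital subalgebra of the form $A'\otimes_F B'$ with $A'\cong M_n(F)$, $B'\cong M_m(F)$, where $n\in D(A)$ and $m\in D(B)$.

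For the divisibility $\mathbf{n}(A)\cdot\mathbf{n}(B)\mid\mathbf{n}(A\otimes_F B)$, I would take arbitrary $n\in D(A)$ and $m\in D(B)$ with witnessing unital matrix subalgebras $A'\subseteq A$ and $B'\subseteq B$. Then $A'\otimes_F B'\cong M_{nm}(F)$ is a unital subalgebra of $A\otimes_F B$ containing $1_{A\otimes_F B}$, so $nm\in D(A\otimes_F B)$ and hence $nm \mid \mathbf{n}(A\otimes_F B)$. Comparing $p$-adic valuations prime by prime, $\sup_{n\in D(A),\,m\in D(B)} v_p(nm) = \sup_n v_p(n) + \sup_m v_p(m) = v_p(\mathbf{n}(A)) + v_p(\mathbf{n}(B))$, which gives the required divisibility of Steinitz numbers.

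For the reverse divisibility $\mathbf{n}(A\otimes_F B)\mid\mathbf{n}(A)\cdot\mathbf{n}(B)$, take any $k\in D(A\otimes_F B)$ with a witnessing unital subalgebra $C\cong M_k(F)$. Since $C$ is finite-dimensional, pick a finite generating set (for instance, the $k^2$ matrix units); by the observation from the first paragraph applied to this finite set, $C$ is contained in some unital subalgebra $A'\otimes_F B'\cong M_{nm}(F)$ with $n\in D(A)$, $m\in D(B)$, and the identities agree because $1_C = 1_{A\otimes_F B}=1_A\otimes 1_B\in A'\otimes_F B'$. Now invoking the preliminary fact cited from Jacobson at the start of Section~2, a unital embedding $M_k(F)\hookrightarrow M_{nm}(F)$ forces $k\mid nm$, hence $k\mid \mathbf{n}(A)\cdot\mathbf{n}(B)$; taking the lcm over all $k\in D(A\otimes_F B)$ yields $\mathbf{n}(A\otimes_F B)\mid\mathbf{n}(A)\cdot\mathbf{n}(B)$. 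The only step that requires genuine input beyond bookkeeping is this last appeal to the Jacobson-type preliminary lemma; everything else is a clean unpacking of the definitions.
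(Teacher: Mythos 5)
Your proof is correct and fills in, carefully, exactly the details that the paper leaves implicit (it states the proposition with only the remark ``The following proposition is straightforward'' and gives no argument at all). Both the decomposition of a finite subset of $A\otimes_F B$ into a unital $M_n(F)\otimes_F M_m(F)\cong M_{nm}(F)$, and the two-sided divisibility argument via $D(A)$, $D(B)$, $D(A\otimes_F B)$ together with the Jacobson-type fact that a unital embedding $M_k(F)\hookrightarrow M_{nm}(F)$ forces $k\mid nm$, are the intended route; the prime-by-prime supremum bookkeeping you include is the standard way to pass from the divisibility of individual products $nm$ to the equality of Steinitz numbers.
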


\begin{theorem}[See \cite{Baranov1}, \cite{Sushch2}, \cite{Glimm}]
	\label{teorBOS}
If $A$ and $B$  are unital locally matrix algebras of countable dimension then $A$ and $B$ are isomorphic if and only if  $\mathbf{n}(A)=\mathbf{n}(B)$.
\end{theorem}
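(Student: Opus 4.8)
The plan is to prove the substantive implication (equality of Steinitz numbers forces isomorphism) by a back-and-forth construction, the classical route to Glimm-type uniqueness results. The converse implication is trivial: if $A\cong B$ then $\mathbf{n}(A)=\mathbf{n}(B)$, since the set $D(A)$ — hence its least common multiple — depends only on the isomorphism type of $A$. So assume $\mathbf{n}(A)=\mathbf{n}(B)$. First I would fix convenient exhaustions. As $\dim_F A$ is countable, picking a basis $a_1,a_2,\dots$ and repeatedly applying the locally matrix property, one builds an ascending chain of unital subalgebras $A_1\subseteq A_2\subseteq\cdots$ with $1_A\in A_i$, $a_i\in A_i$ and $A_i\cong M_{n_i}(F)$; then $A=\bigcup_i A_i$, and the fact recalled in the Preliminaries forces $n_i\mid n_{i+1}$. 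A short computation gives $\mathbf{n}(A)=\operatorname{lcm}_i n_i$ and, more precisely, that $D(A)$ is exactly the set of positive integers dividing $\mathbf{n}(A)$ (a finite divisor of $\operatorname{lcm}_i n_i$ already divides some $n_i$, because the $n_i$ form a divisibility chain). Carrying this out for $B$ as well, with a chain $B_j\cong M_{m_j}(F)$, and using $\mathbf{n}(A)=\mathbf{n}(B)$, we get $D(A)=D(B)$; in particular every $n_i$ lies in $D(B)$ and every $m_j$ lies in $D(A)$.

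The heart of the argument is an extension lemma: if $P\subseteq R$ and $Q\subseteq S$ are unital inclusions of finite-dimensional matrix algebras with $R\cong S$ and $P\cong Q$, then any isomorphism $P\to Q$ extends to an isomorphism $R\to S$. This is where the Skolem--Noether theorem enters: after fixing some isomorphism $\psi\colon R\to S$, the subalgebras $\psi(P)$ and $Q$ of the central simple algebra $S$ are two unital copies of the simple algebra $P$, so post-composing $\psi$ with a suitable inner automorphism of $S$ turns $\psi|_P$ into the prescribed map $P\to Q$. I expect this lemma, together with keeping the two exhaustions synchronized, to be the main point; the remaining steps are routine.

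Finally I would run the back-and-forth. Starting from $P_0=F\cdot 1_A$, $Q_0=F\cdot 1_B$ and the obvious $\varphi_0$, I build ascending chains of unital matrix subalgebras $P_0\subseteq P_1\subseteq\cdots$ in $A$ and $Q_0\subseteq Q_1\subseteq\cdots$ in $B$ with compatible isomorphisms $\varphi_k\colon P_k\to Q_k$. At an odd step I enlarge on the $A$-side: choose $P_k=A_\ell$ for $\ell$ large enough to contain $P_{k-1}$ and the next basis vector of $A$; since the matrix size $n_\ell$ of $P_k$ lies in $D(B)$ and $Q_{k-1}$ sits inside some $B_j$, another application of Skolem--Noether produces a unital subalgebra $Q_k\cong M_{n_\ell}(F)$ of $B$ with $Q_{k-1}\subseteq Q_k$, and then the extension lemma lifts $\varphi_{k-1}$ to $\varphi_k\colon P_k\to Q_k$. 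At an even step I do the symmetric thing on the $B$-side, catching the next basis vector of $B$. The invariant secured by these choices is that at every stage the ambient algebras $P_k$ and $Q_k$ have equal matrix size, which is exactly what the extension lemma needs. Then $\varphi=\bigcup_k\varphi_k\colon A\to B$ is a unital algebra homomorphism, injective because the $\varphi_k$ are compatible embeddings, and surjective because $\bigcup_k P_k=A$ and $\bigcup_k Q_k=B$ by construction; hence $\varphi$ is the required isomorphism.
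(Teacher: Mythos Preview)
The paper does not supply a proof of this theorem at all; it is stated with references to Glimm, Baranov--Zhilinskii, and Bezushchak--Oliynyk--Sushchan\-sky, and the text moves on immediately. So there is no ``paper's own proof'' to compare against.

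Your argument is correct and is precisely the classical Glimm back-and-forth, transported to the purely algebraic setting. The only step that is compressed is the construction of $Q_k$: when you say ``another application of Skolem--Noether produces a unital subalgebra $Q_k\cong M_{n_\ell}(F)$ of $B$ with $Q_{k-1}\subseteq Q_k$'', what is being used is that one can first pick $j$ large enough that $Q_{k-1}\subseteq B_j$ and $n_\ell\mid m_j$, then inside $B_j\cong M_{m_j}(F)$ take any unital chain $M_{m_{k-1}}(F)\subseteq M_{n_\ell}(F)\subseteq M_{m_j}(F)$ and conjugate (Skolem--Noether) so that the inner term coincides with $Q_{k-1}$. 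With that detail spelled out, the induction goes through exactly as you describe, and the union $\varphi=\bigcup_k\varphi_k$ is the desired isomorphism. This is, up to packaging, the proof one finds in the cited sources.
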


For the rest of this paper, we will write $M_n(F) \subseteq A$ if there is a subalgebra $A'$, $1 \in A'\subseteq A $, such that $A'\cong M_n(F)$.

\section{Universally equivalent algebras}

Let $A$ be an  algebraic system (see \cite{Malcev}). The universal elementary theory $UTh(A)$ consists of universal closed  formulas (see \cite{Malcev}) that are valid on $A$. The systems $A$ and $B$ of the same signature are universally equivalent if $UTh(A)=UTh(B)$.

\begin{theorem}
	\label{univ1}
	Let $A$ and $B$ be unital locally matrix algebras.  Then    $A$ and $B$ are   universally equivalent if and only if  their  Steinitz numbers $\mathbf{n}(A)$ and $\mathbf{n}(B)$ are equal, i.e. 
	\begin{equation*}
	\mathbf{n}(A)=\mathbf{n}(B).
	\end{equation*}
\end{theorem}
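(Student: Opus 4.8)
The plan is to prove the two implications separately, exploiting the bridge between the Steinitz number $\mathbf{n}(A)$ and the finite-dimensional matrix subalgebras $M_n(F) \subseteq A$, together with the syntactic characterization of universal theories via embeddability of finitely generated substructures.

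\medskip

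\textbf{Necessity ($A \equiv_\forall B \Rightarrow \mathbf{n}(A) = \mathbf{n}(B)$).} First I would observe that for a fixed positive integer $n$, the property ``$M_n(F) \subseteq A$'' — i.e. the existence of a unital subalgebra isomorphic to $M_n(F)$ — can be expressed by an existential sentence in the language of unital $F$-algebras. Indeed, pick a presentation of $M_n(F)$ by finitely many generators (say the matrix units $e_{ij}$) and the finitely many defining relations $e_{ij}e_{kl} = \delta_{jk} e_{il}$, $\sum_i e_{ii} = 1$; then $M_n(F) \subseteq A$ holds iff the existential sentence asserting ``there exist elements $x_{ij}$ satisfying these relations'' holds in $A$ (the fact that such a homomorphic image of $M_n(F)$ is automatically \emph{isomorphic} to $M_n(F)$ uses simplicity of $M_n(F)$). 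Hence $D(A)$ — and therefore $\mathbf{n}(A) = \operatorname{lcm} D(A)$ — is determined by $UTh(A)$, so universal equivalence forces $\mathbf{n}(A) = \mathbf{n}(B)$. I should be slightly careful about whether the relevant sentences are literally \emph{universal} rather than merely existential; since $A$ and $B$ are universally equivalent iff they satisfy the same universal sentences iff they satisfy the same existential sentences, this is not an obstacle, but it is worth a sentence of justification.

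\medskip

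\textbf{Sufficiency ($\mathbf{n}(A) = \mathbf{n}(B) \Rightarrow A \equiv_\forall B$).} Here I would use the standard model-theoretic criterion: $A$ and $B$ are universally equivalent if and only if every finitely generated subalgebra of $A$ embeds into $B$ and vice versa (more precisely, into an ultrapower, but by symmetry it suffices to handle the finitely generated substructures). So let $C \subseteq A$ be generated by finitely many elements $a_1, \dots, a_s$. By the locally matrix hypothesis, these lie in a unital subalgebra $A' \cong M_k(F)$ for some $k$, so $C \hookrightarrow M_k(F)$; thus it suffices to show $M_k(F) \hookrightarrow B$ as a (not necessarily unital) $F$-algebra whenever $k \mid \mathbf{n}(B)$. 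Since $k \mid \mathbf{n}(B) = \operatorname{lcm} D(B)$, there is some $m \in D(B)$ with $k \mid m$, i.e. a unital $B' \cong M_m(F)$ inside $B$ with $m = kl$; then $M_m(F) \cong M_k(F) \otimes_F M_l(F)$ contains a copy of $M_k(F)$ (embedded as $M_k(F) \otimes e_{11}$, using a matrix unit $e_{11} \in M_l(F)$), giving the desired non-unital embedding $M_k(F) \hookrightarrow B$. Composing, $C \hookrightarrow M_k(F) \hookrightarrow B$, and by symmetry the same holds with $A$ and $B$ swapped.

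\medskip

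\textbf{Main obstacle.} The conceptual core of the argument is routine once the dictionary is set up; the delicate points are (i) arguing that a finitely generated subalgebra of $A$ sits inside a \emph{finite}-dimensional matrix subalgebra (immediate from the definition of locally matrix algebra), and (ii) being precise about the model-theoretic criterion for universal equivalence — specifically that it is enough to embed finitely generated substructures, and that these embeddings need not be unital, so that the elementary tensor-decomposition argument $M_{kl}(F) \cong M_k(F)\otimes_F M_l(F)$ from the Preliminaries does the job. I expect step (ii), making the passage between ``same universal theory'' and ``mutual embeddability of finitely generated subalgebras'' airtight, to be where most of the care is needed; everything else reduces to Proposition~\ref{tensor} and the definition of $\mathbf{n}(A)$.
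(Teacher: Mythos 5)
Your necessity direction is essentially the paper's: the paper writes a \emph{universal} sentence asserting $M_n(F)\nsubseteq A$ (Lemma~\ref{form1}), while you phrase its negation as an existential sentence; the two are interchangeable for universal equivalence, and your remark that a nonzero homomorphic image of $M_n(F)$ must be an isomorphism (by simplicity) is exactly the point the paper leaves implicit.

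For sufficiency there is, however, a genuine gap. The paper's Remark stresses that $1$ is a constant in the signature; accordingly, ``substructure'' means \emph{unital} subalgebra and an ``embedding'' must send $1$ to $1$. Your argument embeds the finitely generated $C\subseteq A$ into a unital $M_k(F)\subseteq A$ with $1_C=1_{M_k(F)}=1_A$, but you then explicitly construct a \emph{non-unital} map $M_k(F)\cong M_k(F)\otimes e_{11}\hookrightarrow M_m(F)\subseteq B$. The composite $C\hookrightarrow B$ sends $1_A$ to a proper idempotent, so it is not a morphism in the language with $1$, and quantifier-free formulas mentioning $1$ are not preserved. If one could drop unitality, the theorem would trivialize, precisely as the paper's Remark warns. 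The repair is immediate and costs nothing: since $D(B)$ is closed under divisors (if $n\in D(B)$ and $k\mid n$ with $n=kl$, then $M_k(F)\otimes 1\subseteq M_k(F)\otimes_F M_l(F)\cong M_n(F)$ is a unital copy of $M_k(F)$), one has $k\mid\mathbf{n}(B)\Rightarrow k\in D(B)$, giving a \emph{unital} embedding $M_k(F)\hookrightarrow B$ directly; equivalently, in your tensor decomposition replace $e_{11}$ by $1\in M_l(F)$.

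With that fix, your route is a mild streamlining of the paper's: the paper invokes the Malcev Local Theorem to unitally embed $A$ into an ultraproduct of $M_n(F)$, $n\in D(A)$, and then into an ultrapower of $B$; you instead use the elementary observation that if every finitely generated substructure of $A$ embeds (unitally) into $B$, then $UTh(B)\subseteq UTh(A)$, which avoids naming Malcev and ultraproducts explicitly. Both arguments ultimately rest on the same facts -- that $D(A)=D(B)$ (via divisor-closedness and directedness of $D$) and that $M_k(F)$ embeds unitally into $B$ whenever $k\mid\mathbf{n}(B)$ -- so this is the same proof in slightly different model-theoretic clothing.
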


For proof of this theorem we need a lemma.
\begin{lemma}
	\label{form1}
	Let $A$ be a unital locally matrix algebra. The property $n \notin D(A)$ can be written as a universal closed  formula.
\end{lemma}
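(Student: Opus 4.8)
The plan is to encode the statement ``$M_n(F)\subseteq A$'' as an \emph{existential} first-order sentence in the signature of unital associative $F$-algebras; negating it then yields the desired universal sentence describing $n\notin D(A)$.

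The starting point is the finite presentation of the matrix algebra by matrix units: as a unital associative $F$-algebra, $M_n(F)$ is generated by elements $e_{ij}$, $1\le i,j\le n$, subject only to the relations $e_{ij}e_{kl}=\delta_{jk}e_{il}$ and $\sum_{i=1}^{n}e_{ii}=1$. (These are complete: modulo them every word in the $e_{ij}$ reduces to some $e_{il}$ or to $0$, and $1$ lies in the span of the $e_{ii}$, so the presented algebra has dimension $\le n^2$, and since it surjects onto $M_n(F)$ it must equal $M_n(F)$.) Using this, I would first record the purely algebraic equivalence: $n\in D(A)$ if and only if there exist $x_{ij}\in A$ with $x_{ij}x_{kl}=\delta_{jk}x_{il}$ for all $i,j,k,l$ and $\sum_{i=1}^{n}x_{ii}=1_A$. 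The forward implication is immediate — take $x_{ij}$ to be the images of the $e_{ij}$ under an isomorphism $\psi\colon M_n(F)\to A'$ onto a subalgebra $A'\ni 1_A$, noting that such an isomorphism is automatically unital and that $1_A$, lying in the subalgebra $A'$, is forced to be its identity, whence $\sum_i x_{ii}=\psi(1)=1_A$. The reverse implication carries the (small) real content: a tuple $(x_{ij})$ satisfying the relations determines, via the presentation, a homomorphism $\varphi\colon M_n(F)\to A$, $e_{ij}\mapsto x_{ij}$; since $\varphi(1)=\sum_i x_{ii}=1_A\neq 0$ (note $1_A\neq0$, being the identity of a subalgebra isomorphic to some $M_m(F)$) and $M_n(F)$ is simple, $\varphi$ is injective, so $A':=\varphi(M_n(F))\cong M_n(F)$ is a subalgebra containing $1_A$, i.e.\ $n\in D(A)$.

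Granting this equivalence, $n\in D(A)$ is exactly the validity in $A$ of the sentence
\begin{equation*}
\exists x_{11}\,\exists x_{12}\cdots\exists x_{nn}\ \Bigl[\ \Bigl(\sum_{i=1}^{n}x_{ii}=1\Bigr)\ \wedge\ \bigwedge_{i,j,k,l}\bigl(x_{ij}x_{kl}=\delta_{jk}x_{il}\bigr)\ \Bigr],
\end{equation*}
where each occurrence of $\delta_{jk}x_{il}$ denotes the term $x_{il}$ if $j=k$ and the constant $0$ if $j\neq k$, so that every conjunct is an atomic formula built only from the ring operations and the constants $0,1$. Its negation
\begin{equation*}
\forall x_{11}\,\forall x_{12}\cdots\forall x_{nn}\ \Bigl[\ \Bigl(\sum_{i=1}^{n}x_{ii}\neq 1\Bigr)\ \vee\ \bigvee_{i,j,k,l}\bigl(x_{ij}x_{kl}\neq\delta_{jk}x_{il}\bigr)\ \Bigr]
\end{equation*}
is a universal closed formula, and it holds in $A$ precisely when $n\notin D(A)$.

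I expect the only delicate step to be the reverse implication of the algebraic equivalence: one must see that merely exhibiting elements that formally satisfy the matrix-unit relations (together with the harmless fact $1_A\neq 0$) already produces a genuine copy of $M_n(F)$ inside $A$ — this is where simplicity of $M_n(F)$ is essential. The rest is routine bookkeeping about the presentation and about the observation that the relations in question are expressible by atomic formulas in the given signature.
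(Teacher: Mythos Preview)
Your proof is correct and follows essentially the same approach as the paper: both write down the universal closure of the negation of the matrix-unit relations (the paper phrases the negation as an implication to $1=0$, you as a disjunction of inequalities, which are equivalent since $1_A\neq 0$). You actually supply the justification the paper omits, namely the use of simplicity of $M_n(F)$ to see that any tuple satisfying the relations with $\sum x_{ii}=1_A$ spans a genuine copy of $M_n(F)$.
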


\begin{proof}
Consider the formula \begin{multline}
\label{f_1} 
\forall x_{ij}, \ 1 \le i,j \le n, \ \ ( (\land_{i,j,t,s} \ x_{ij}x_{ts}=\delta_{jt}x_{is}) \\\land(x_{11}+x_{22}+\ldots +x_{nn}=1) \rightarrow (1=0))
,\end{multline}where  $\delta_{jp}$ is the Kronecker delta, i.e. $\delta_{jp}=\begin{cases}
1, & \text{if  $j =p$ }\\
0, & \text{if  $j \neq p$}
\end{cases}$
and $x_{ij}$, $ 1 \le i,j \le n $.

The universal closed  formula \eqref{f_1} is true if and only if $M_n(F) \nsubseteq A$.
\end{proof}

\begin{proof}[Proof of Theorem \ref{univ1}]

First we recall that if some universal closed  formula is true on an algebraic structure, then it is true on any algebraic substructure.
	
If unital locally matrix algebras 	$A$ and $B$ are  universally equivalent, then by Lemma \ref{form1}  $M_n(F) \nsubseteq A$ if and only if $M_n(F) \nsubseteq B$. This implies that $D(A)$ is equal to $D(B)$ and therefore $\mathbf{n}(A)=\mathbf{n}(B)$.

Let's now suppose that  $\mathbf{n}(A)=\mathbf{n}(B)$. From the Malcev Local Theorem (see \cite{Malcev}) it follows, that the algebra $A$ is unitally embeddable in an ultraproduct of matrix algebras $M_n(F)$, $n \in D(A)$. If $\mathbf{n}(A)=\mathbf{n}(B)$,  then $D(A)=D(B)$. Since every $M_n(F)$, $n \in D(A)$, is unitally embeddable in $B$, it follows that the algebra $A$ is unitally  embeddable in an ultrapower of the algebra $B$. 

As  the algebra $A$ is unitally embeddable  in an ultrapower of the algebra $B$, if some universal  closed  formula is true on $B$, then it is true on $A$. 

Similar, the unital locally matrix algebra $B$ is unitally embeddable in an ultrapower of $A$, so if some universal  closed  formula  is true on $A$, then it is true on $B$ and our theorem is proved. 
\end{proof}

\begin{remark}
	It is important that identity is added to the signature. Without $1$ in the signature any two infinite dimensional locally matrix algebras are elementarily equivalent.   
\end{remark}

\section{Examples }
 
Let $V$ be a vector space over a field $F$, $char F \ne 2$. Firstly, we recall a construction of Clifford algebras. 

A map $f: \ V \to F$ is called a { \it quadratic form} if the  following conditions hold:
\begin{enumerate}
	\item 
	$f(\alpha v)=\alpha ^2 f(v)$ for every $\alpha\in F$ and $v \in V$;
	\item 
	$f(u,v)=f(u+v)-f(v)-f(u)$ is a bilinear form. 
\end{enumerate}

 The quadratic form $f: \ V \to F$ is { \it nondegenerate} if and only if the bilinear form $f(u,v)$ is nondegenerate.
 
The Clifford algebra $Cl(V,f)$ is a unital algebra  generated by the vector space $V$ and $1$ and defined by relations $v^2=f(v)\cdot 1$ for all $v \in V$. Hence,
$$vw+wv=f(v,w) \cdot 1 $$
holds for any $v, w$ from the vector space $V$ in the Clifford algebra $Cl(V,f)$.

Let  $\{v_i\}_{i \in I}$ be a basis of the  vector space $V$. Assume, that  the set of  indexes $I$ is ordered.  Then all possible  ordered products $v_{i_1} v_{i_2} \ldots v_{i_k}$, $i_1 < i_2 < \ldots < i_k$, and $1$ (that can be defined as the empty product) is a  basis of the  Clifford algebra $Cl(V,f)$.

Obviously, if $\dim_F V=n$, then $\dim_F Cl(V,f)=2^n$. If $V$ is an  infinite dimensional vector space, then $\dim_F V =\dim_F  Cl(V,f)$. 

Onward, we assume that the field $F$ is algebraically closed and the  quadratic form $f$ is nondegenerate. 

\begin{theorem} \cite{Jacobson}
	\label{jacob}
Let $Cl(V,f)$ be the Clifford algebra defined by a non\-de\-ge\-ne\-ra\-te quadratic form $f$ on $V$ and $\dim_F V=n< \infty$. If the number $n$ is even, then the Clifford algebra  $Cl(V,f)$ is isomorphic to the matrix algebra $M_{2^{\frac{n}{2}}}(F)$. If 
the number $n$ is odd, then the algebra  $Cl(V,f)$ is isomorphic to the direct sum of  matrix algebras $M_{2^{\frac{n-1}{2}}}(F) \oplus M_{2^{\frac{n-1}{2}}}(F)$.
\end{theorem}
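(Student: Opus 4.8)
The plan is to reduce the statement to the two base cases $n=1$ and $n=2$ by means of a ``period two'' tensor decomposition of the Clifford algebra, using throughout the universal property of $Cl(V,f)$ (a linear map $V\to R$ into an associative unital $F$-algebra sending $v$ to an element $a$ with $a^2=f(v)1$ extends to an algebra homomorphism $Cl(V,f)\to R$) together with the dimension formula $\dim_F Cl(V,f)=2^{\dim_F V}$ recalled above. First I would normalise the form: since $\operatorname{char}F\ne 2$ and $f$ is nondegenerate, $V$ has an orthogonal basis $e_1,\dots,e_n$ with $f(e_i)\ne 0$, and since $F$ is algebraically closed I may rescale each $e_i$ so that $f(e_i)=1$. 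Write $Cl_n$ for the resulting algebra; it is generated by $e_1,\dots,e_n$ subject to $e_i^2=1$ and $e_ie_j=-e_je_i$ for $i\ne j$.

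Next I would settle the base cases. For $n=1$ we have $Cl_1=F[e]/(e^2-1)$, and since $x^2-1=(x-1)(x+1)$ with coprime factors in $F[x]$, the Chinese Remainder Theorem gives $Cl_1\cong F\oplus F=M_{2^0}(F)\oplus M_{2^0}(F)$. For $n=2$, the assignment $e_1\mapsto\left(\begin{smallmatrix}1&0\\0&-1\end{smallmatrix}\right)$, $e_2\mapsto\left(\begin{smallmatrix}0&1\\1&0\end{smallmatrix}\right)$ respects the defining relations, hence induces an algebra homomorphism $Cl_2\to M_2(F)$; its image contains the linearly independent elements $I$, $e_1$, $e_2$, $e_1e_2$, so it is surjective, and since $\dim_F Cl_2=4=\dim_F M_2(F)$ it is an isomorphism. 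Thus $Cl_2\cong M_{2^1}(F)$.

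The key step is the periodicity isomorphism $Cl_{n+2}\cong Cl_n\otimes_F M_2(F)$ for $n\ge 1$. I would fix $\sqrt{-1}\in F$, put $\gamma_1=\left(\begin{smallmatrix}1&0\\0&-1\end{smallmatrix}\right)$, $\gamma_2=\left(\begin{smallmatrix}0&1\\1&0\end{smallmatrix}\right)$ and $c=\sqrt{-1}\,\gamma_1\gamma_2\in M_2(F)$, and inside $Cl_n\otimes_F M_2(F)$ define $g_i=e_i\otimes c$ for $1\le i\le n$, $g_{n+1}=1\otimes\gamma_1$, $g_{n+2}=1\otimes\gamma_2$. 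A short computation — from $\gamma_1^2=\gamma_2^2=I$ and $\gamma_1\gamma_2=-\gamma_2\gamma_1$ one gets $c^2=I$ and $c\gamma_k=-\gamma_k c$ — shows that the $g_i$ satisfy $g_i^2=1$ and anticommute pairwise, i.e. exactly the defining relations of $Cl_{n+2}$. By the universal property this yields an algebra homomorphism $\varphi\colon Cl_{n+2}\to Cl_n\otimes_F M_2(F)$ sending the $i$-th generator to $g_i$. Its image contains $1\otimes\gamma_1$ and $1\otimes\gamma_2$, hence all of $1\otimes M_2(F)$ (the $n=2$ base case), hence $1\otimes c^{-1}$, and therefore $g_i\cdot(1\otimes c^{-1})=e_i\otimes 1$ for each $i\le n$; since $\{e_i\otimes 1\}$ together with $1\otimes M_2(F)$ generates the tensor product, $\varphi$ is surjective. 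As $\dim_F Cl_{n+2}=2^{n+2}=2^n\cdot 4=\dim_F\bigl(Cl_n\otimes_F M_2(F)\bigr)$, the map $\varphi$ is an isomorphism.

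Finally I would iterate, using $M_a(F)\otimes_F M_b(F)\cong M_{ab}(F)$: if $n=2m$ is even then $Cl_n\cong Cl_2\otimes_F M_2(F)^{\otimes(m-1)}\cong M_2(F)^{\otimes m}\cong M_{2^m}(F)=M_{2^{n/2}}(F)$, while if $n=2m+1$ is odd then $Cl_n\cong Cl_1\otimes_F M_2(F)^{\otimes m}\cong(F\oplus F)\otimes_F M_{2^m}(F)\cong M_{2^m}(F)\oplus M_{2^m}(F)=M_{2^{(n-1)/2}}(F)\oplus M_{2^{(n-1)/2}}(F)$. The part I expect to be the main obstacle is the periodicity isomorphism itself: one must find the right elements $g_i$ of $Cl_n\otimes_F M_2(F)$ — the twist by $c=\sqrt{-1}\,\gamma_1\gamma_2$ is precisely what makes $g_i^2=1$ while forcing the $g_i$ to anticommute with the two ``new'' generators $g_{n+1},g_{n+2}$ — and then verify surjectivity of $\varphi$; after that, the dimension count and the two base cases finish the argument mechanically. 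If one prefers to avoid the abstract tensor product, the same formulas build an explicit family of $2^{\lfloor n/2\rfloor}\times 2^{\lfloor n/2\rfloor}$ ``gamma matrices'' recursively, at the price of heavier bookkeeping.
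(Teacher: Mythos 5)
The paper does not prove Theorem \ref{jacob}; it is quoted from Jacobson's book, so there is no ``paper's own proof'' to compare against. Your blind attempt supplies a complete and correct argument, and it follows what is essentially the standard textbook route: diagonalise and normalise the form using $\operatorname{char}F\neq 2$ and algebraic closure, settle the base cases $Cl_1\cong F\oplus F$ and $Cl_2\cong M_2(F)$ by direct computation, prove the periodicity isomorphism $Cl_{n+2}\cong Cl_n\otimes_F M_2(F)$ by exhibiting generators $g_i$ that satisfy the Clifford relations and then doing a surjectivity-plus-dimension-count, and finally iterate using $M_a(F)\otimes_F M_b(F)\cong M_{ab}(F)$.

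The computations check out: with $c=\sqrt{-1}\,\gamma_1\gamma_2$ one indeed has $c^2=I$ and $c\gamma_k=-\gamma_k c$, from which $g_i^2=1$ and pairwise anticommutativity of the $g_i$ follow, and the image of $\varphi$ contains $1\otimes M_2(F)$ and hence $e_i\otimes 1=g_i(1\otimes c)$, so $\varphi$ is surjective and the dimension count $2^{n+2}=2^n\cdot 4$ finishes it. One small point worth making explicit when invoking the universal property: you verify the Clifford relations only on the orthogonal generators $e_i$ (namely $g_i^2=1$ and $g_ig_j=-g_jg_i$ for $i\neq j$), and the universal property requires $\iota(v)^2=f(v)\cdot 1$ for \emph{every} $v\in V$; this does follow because for $v=\sum a_ie_i$ the cross-terms cancel by anticommutativity, but it deserves one line. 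With that caveat noted, this is a correct, self-contained proof of the cited classical fact.
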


\begin{theorem}
	\label{dimCl}
Let  $V$ be  an infinite dimensional vector space. Then the Clifford algebra  $Cl(V,f)$ is locally matrix and $\mathbf{n}(Cl(V,f))=2^{\infty}$.	
\end{theorem}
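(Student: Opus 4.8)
The plan is to exhaust $Cl(V,f)$ by finite-dimensional Clifford subalgebras corresponding to even-dimensional subspaces of $V$, and to apply Theorem \ref{jacob} to each of them. First I would fix a basis $\{v_i\}_{i\in I}$ of the infinite-dimensional space $V$. Given finitely many elements $a_1,\dots,a_s\in Cl(V,f)$, each $a_k$ is a finite $F$-linear combination of ordered monomials in the $v_i$'s, so there is a finite subset $J\subseteq I$ with all the $a_k$ lying in the subalgebra generated by $1$ and $\{v_i : i\in J\}$; enlarging $J$ by one index if necessary, I may assume $|J|=2m$ is even. Let $W=\operatorname{span}_F\{v_i : i\in J\}$ and let $C=\langle 1, W\rangle$ be the subalgebra it generates. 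The first key point is that $C$ is canonically the Clifford algebra $Cl(W, f|_W)$: the defining relations $v^2=f(v)1$ restrict to $W$, the ordered monomials in $\{v_i : i\in J\}$ form an $F$-basis of $C$ of size $2^{2m}$, and this basis is exactly a basis of $Cl(W,f|_W)$, so the natural surjection $Cl(W,f|_W)\twoheadrightarrow C$ is an isomorphism by dimension count. The second key point is that $f|_W$ is still nondegenerate: since $f$ is nondegenerate and $F$ is algebraically closed, I can (by the usual Gram–Schmidt/diagonalization argument for quadratic forms in characteristic $\ne 2$) choose the basis $\{v_i\}_{i\in I}$ to be orthogonal with $f(v_i)\ne 0$ for all $i$, and then any finite orthogonal sub-collection spans a nondegenerate subspace; this orthogonality hypothesis can be arranged at the outset without loss of generality.

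With $C\cong Cl(W,f|_W)$ a Clifford algebra of a nondegenerate form on an even-dimensional ($2m$-dimensional) space over an algebraically closed field, Theorem \ref{jacob} gives $C\cong M_{2^m}(F)$. Moreover $1_{Cl(V,f)}\in C$ by construction. Hence every finite collection of elements of $Cl(V,f)$ lies in a unital subalgebra isomorphic to a matrix algebra, which is precisely the definition of a locally matrix algebra; this proves the first assertion.

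For the Steinitz number, I use the subalgebras just constructed to show $D(Cl(V,f))$ contains every power of $2$: taking $|J|=2m$ with $m$ arbitrary exhibits $M_{2^m}(F)\subseteq Cl(V,f)$ for all $m\ge 1$, so $2^m \mid \mathbf{n}(Cl(V,f))$ for all $m$, whence $2^\infty \mid \mathbf{n}(Cl(V,f))$. Conversely I must show no odd prime $p$ divides $\mathbf{n}(Cl(V,f))$, equivalently that $M_n(F)\not\subseteq Cl(V,f)$ whenever $n$ is divisible by an odd prime. Suppose $1\in A'\subseteq Cl(V,f)$ with $A'\cong M_n(F)$. Since $A'$ is finite-dimensional, its (finite) basis lies in some finite-dimensional $C=Cl(W,f|_W)\cong M_{2^m}(F)$ as above with $1\in C$, so $A'$ is a unital subalgebra of $M_{2^m}(F)$; by the centralizer fact recalled in the Preliminaries (applied inside $M_{2^m}(F)$), $n \mid 2^m$, forcing $n$ to be a power of $2$. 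Therefore $D(Cl(V,f))=\{2^m : m\ge 0\}$ and $\mathbf{n}(Cl(V,f))=2^\infty$.

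I expect the main obstacle to be the bookkeeping in the first part: verifying cleanly that the subalgebra generated by a finite orthogonal subset of basis vectors is genuinely the Clifford algebra of a nondegenerate form (so that Theorem \ref{jacob} applies), and arranging at the start that the chosen basis of $V$ is orthogonal with all $f(v_i)\ne 0$. Everything after that is a direct application of Theorem \ref{jacob}, the definition of a locally matrix algebra, and the $m=nk$ centralizer identity from the Preliminaries.
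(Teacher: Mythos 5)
Your overall strategy coincides with the paper's: given finitely many elements of $Cl(V,f)$, trap them inside the Clifford algebra of a finite even-dimensional subspace on which $f$ is nondegenerate, apply Theorem~\ref{jacob} to identify that subalgebra with $M_{2^{m}}(F)$, and read off both local matricity and the Steinitz number. Your extra details --- the dimension count showing the subalgebra generated by $W$ really is $Cl(W,f|_W)$, and the argument via the $m=nk$ centralizer fact from the Preliminaries that no odd prime can occur in $D(Cl(V,f))$ --- are correct and in fact make explicit a step the paper merely asserts.

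There is, however, one genuine gap. You reduce to the case of a global orthogonal basis $\{v_i\}_{i\in I}$ of $V$ with $f(v_i)\neq 0$, citing ``the usual Gram--Schmidt/diagonalization argument.'' That argument is a finite-dimensional one; in infinite dimensions a nondegenerate symmetric bilinear form need not admit an orthogonal Hamel basis. The difficulty is that for an infinite-dimensional nondegenerate subspace $W\subset V$ one cannot conclude $V=W\oplus W^{\perp}$, so transfinite Gram--Schmidt can stall with a maximal orthogonal family that fails to span. Indeed, in the paper's own later example $V=\ell^{2}(\mathbb{C})$ with $f(a)=\sum a_i^2$, the orthogonal family $\{e_i\}$ is maximal (since $\mathrm{span}(e_i)^{\perp}=0$) yet far from a Hamel basis, so ``arrange orthogonality at the outset'' cannot simply be taken for granted. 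The paper avoids this by working purely locally: given a finite-dimensional $W$ containing the relevant elements, it extends $W$ to a finite even-dimensional $\tilde{W}\supseteq W$ with $f|_{\tilde{W}}$ nondegenerate (kill the radical of $W$ by pairing it against vectors of $V$, then adjust parity). That local extension always works and is the step you should substitute for the global orthogonal-basis claim; with that replacement your proof is complete and essentially identical to the paper's.
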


\begin{proof}
Assume, that $S$ is a finite subset of the Clifford algebra  $Cl(V,f)$. As $S$ is finite, there is a  finite dimensional subspace $W$ of the vector space $V$, such that $S \subseteq Cl(W,f)$. 

For any  finite dimensional subspace $W$ of a vector space $V$ there is a  subspace $\tilde {W}$ of  $V$, such that the  following conditions hold:
\begin{enumerate}
	\item 
	$\dim_F \tilde {W}$ is even; 
	\item
	$W \subseteq \tilde {W}$; 
	\item
the restriction of the form $f$ to $\tilde {W}$ is nondegenerate. 	
\end{enumerate}

Assume now, that $\dim_F \tilde{W}=n$. Then
$S \subseteq Cl(W,f) \subseteq Cl(\tilde{W},f)$
and $$ Cl(\tilde{W},f) = M_{2^{\frac{n}{2}}}(F).$$
Therefore, by the definition of a locally matrix algebra and the Steinitz number	corresponding to it, the Clifford algebra  $Cl(V,f)$ is locally matrix and $\mathbf{n}(Cl(V,f))=2^{\infty}$.	
\end{proof}

To obtain more examples we consider a generalization  of Clifford algebras.

Choose a positive integer $l>1$ that is coprime with characteristic of the ground field $F.$  Let $\xi\in F$ be an $l$-th primitive root of $1.$ Consider the generalized Clifford algebra  \begin{multline*}
Clg(l,m)=\langle x_1,\ldots, x_m \, | \,  x^l_i =1; x_i^{-1}x_j x_i =\xi x_j  \text{ for } i<j; \\ x_i^{-1}x_j x_i =\xi^{-1} x_j  \text{ for } i>j; \ 1\leq i,j \leq m \rangle.
\end{multline*}  Such algebras in a more general form were considered in \cite{Ramakr}.

Similarly, for an arbitrary ordered set $I$ we consider  
\begin{multline*}
	Clg(l,I)=\langle x_i, i \in I \, | \,  x^l_i =1; x_i^{-1}x_j x_i =\xi x_j  \text{ for } i<j; \\ x_i^{-1}x_j x_i =\xi^{-1} x_j  \text{ for } i>j;\ i,j \in I \rangle.
\end{multline*} 

\begin{theorem}
\label{th6_1} \ \
 1) For an even $m$ we have $$Clg(l,m)\cong M_{l^{\frac{m}{2}}}(F);$$ 
	
	2) for an odd $m$ we have $Clg(l,m)\cong \underbrace{M_{l^{\frac{m-1}{2}}}(F)\oplus \cdots \oplus   M_{l^{\frac{m-1}{2}}}(F)}_l.$
\end{theorem}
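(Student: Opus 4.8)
The plan is to mimic the structure of the classical Clifford-algebra computation (Theorem \ref{jacob}), replacing the anticommuting generators by the $\xi$-commuting generators $x_i$. I would first treat the even case $m=2k$. The idea is to build an explicit isomorphism $Clg(l,2k)\to M_{l^k}(F)$ by sending the $x_i$ to concrete matrices constructed from the two basic $l\times l$ matrices: the cyclic shift $P$ with $Pe_j=e_{j+1}$ (indices mod $l$) and the diagonal matrix $Q=\mathrm{diag}(1,\xi,\xi^2,\ldots,\xi^{l-1})$. These satisfy $P^l=Q^l=1$ and the Weyl commutation relation $Q^{-1}PQ=\xi P$ (or $P^{-1}QP=\xi^{-1}Q$, depending on convention). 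I would then define, for $1\le s\le k$,
\begin{align*}
x_{2s-1}&\mapsto Q^{\otimes (s-1)}\otimes P\otimes 1^{\otimes(k-s)},\\
x_{2s}&\mapsto Q^{\otimes (s-1)}\otimes Q\otimes 1^{\otimes(k-s)}
\end{align*}
(possibly with a sign/root adjustment so that cross terms with $i<j$ pick up exactly $\xi$ and those with $i>j$ pick up $\xi^{-1}$), and check that all defining relations $x_i^l=1$ and the braiding relations hold under this assignment. This gives a homomorphism $Clg(l,2k)\to M_l(F)^{\otimes k}\cong M_{l^k}(F)$.

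Next I would argue this homomorphism is an isomorphism. Surjectivity follows because the images of the $x_i$ generate $M_l(F)^{\otimes k}$: already $P$ and $Q$ generate $M_l(F)$ (a standard fact, since any matrix commuting with both $P$ and $Q$ is scalar, so the algebra they generate has trivial centralizer and hence is all of $M_l(F)$), and tensoring gives the full tensor product. For injectivity I would compute the dimension of $Clg(l,m)$: using the braiding relations, every word in the $x_i$ can be rewritten as $\xi^c x_1^{a_1}\cdots x_m^{a_m}$ with $0\le a_i\le l-1$, and using $x_i^l=1$ the monomials $x_1^{a_1}\cdots x_m^{a_m}$ span, so $\dim_F Clg(l,m)\le l^m$. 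Hence $\dim_F Clg(l,2k)\le l^{2k}=\dim_F M_{l^k}(F)$, and a surjection between spaces of equal (finite) dimension is an isomorphism; this simultaneously shows the spanning monomials are a basis.

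For the odd case $m=2k+1$, I would first apply the even result to the subalgebra generated by $x_1,\ldots,x_{2k}$, which is isomorphic to $M_{l^k}(F)$, and then analyze the remaining generator $x_{2k+1}$. It commutes with this copy of $M_{l^k}(F)$ up to the $\xi$-twist — more precisely one checks that a suitable monomial $z=\xi^{?}\,x_1^{b_1}\cdots x_{2k}^{b_{2k}}x_{2k+1}$ is central (the exponents $b_i$ are chosen to cancel the braiding with each $x_j$), satisfies $z^l=1$, and lies outside the matrix subalgebra; thus the center of $Clg(l,2k+1)$ contains $F[z]\cong F[t]/(t^l-1)$, which since $F$ is algebraically closed (so $t^l-1$ splits into distinct linear factors) is $F^{\oplus l}$ via the idempotents $\tfrac1l\sum_j \xi^{-ij}z^j$. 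Multiplying by these orthogonal idempotents decomposes $Clg(l,2k+1)$ into $l$ two-sided ideals, each a homomorphic image of $M_{l^k}(F)$ and hence (by simplicity, being nonzero) isomorphic to $M_{l^k}(F)$; a dimension count ($l\cdot l^{2k}=l^{2k+1}\le \dim Clg(l,2k+1)\le l^{2k+1}$) confirms there is nothing more, giving $Clg(l,2k+1)\cong M_{l^k}(F)^{\oplus l}$.

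The main obstacle I anticipate is getting the Weyl-pair assignment exactly right so that the asymmetry in the relations (factor $\xi$ for $i<j$ versus $\xi^{-1}$ for $i>j$) is reproduced faithfully, and not some other power of $\xi$; this requires care with the ordering of the tensor legs and possibly replacing $x_i\mapsto \xi^{\alpha_i}(\cdots)$ by scalar corrections. The verification of $x_i^l=1$ is immediate since each image is a tensor product of $l$-th roots of $1$. Everything else — the spanning/dimension bound, surjectivity via the centralizer argument, and the idempotent decomposition in the odd case — is routine once the even case is nailed down. An alternative to the explicit matrices is an inductive argument peeling off two generators at a time and invoking the centralizer formula $M_m\cong M_n\otimes M_k$ recalled in the Preliminaries, but the explicit Weyl-pair construction is cleaner and I would present that.
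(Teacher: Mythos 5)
The route you take---an explicit Jordan--Wigner/Weyl-pair realization of $Clg(l,m)$ inside $M_l(F)^{\otimes k}$, with surjectivity from the fact that $P,Q$ generate $M_l(F)$ and injectivity from the $l^m$ spanning bound---is genuinely different from the paper's. The paper establishes $\dim_F Clg(l,m)=l^m$ via Gr\"obner--Shirshov, uses the automorphisms $\varphi_i$ to show that every $\mathrm{Aut}$-invariant subspace is spanned by ordered monomials (hence there are no proper invariant ideals and the algebra is semisimple), and then identifies the Wedderburn decomposition by computing the center through a linear system over $\mathbb{Z}/l\mathbb{Z}$. Your approach trades the Gr\"obner--Shirshov step and the center computation for an explicit matrix model, and has the pleasant side effect that the basis claim falls out of the surjection-plus-dimension-count for free; the paper's argument is more structural and requires no concrete realization.

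There is, however, one substantive error in your construction, and it is not of the kind a scalar correction can repair. In the assignment
\begin{align*}
x_{2s-1}&\mapsto Q^{\otimes(s-1)}\otimes P\otimes 1^{\otimes(k-s)},\\
x_{2s}&\mapsto Q^{\otimes(s-1)}\otimes Q\otimes 1^{\otimes(k-s)},
\end{align*}
the image of $x_2$, namely $Q\otimes 1\otimes\cdots$, literally commutes with the image of $x_3$, namely $Q\otimes P\otimes\cdots$, because $Q$ commutes with $Q$ in the first leg and $1$ commutes with $P$ in the second; but the relation demands $x_2^{-1}x_3x_2=\xi x_3$. Rescaling generators by scalars leaves all commutators unchanged, so no choice of $\xi^{\alpha_i}$ can fix this. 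The twist element occupying the first $s-1$ legs must $\xi$-commute with \emph{both} $P$ and $Q$, and the unique such monomial $Q^aP^b$ (up to scalar) is $R=QP^{-1}$: the condition against $P$ forces $a\equiv 1$, the condition against $Q$ forces $b\equiv -1\pmod l$. With
\begin{align*}
x_{2s-1}&\mapsto R^{\otimes(s-1)}\otimes P\otimes 1^{\otimes(k-s)},\\
x_{2s}&\mapsto R^{\otimes(s-1)}\otimes Q\otimes 1^{\otimes(k-s)},\qquad R=QP^{-1},
\end{align*}
all the $\xi$-commutation relations hold, and now it is the relation $x_i^l=1$ that genuinely needs your anticipated scalar corrections, since $R^l=(QP^{-1})^l=\xi^{l(l-1)/2}\cdot 1$, which equals $-1$ when $l$ is even; one rescales each generator by a suitable $l$-th root available in the algebraically closed field $F$. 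Once this is amended, your surjectivity argument (centralizer of $\{P,Q\}$ is $F\cdot 1$, hence the subalgebra they generate is $M_l(F)$), your $\dim\le l^m$ bound from reordering words, and your odd-case decomposition via a central $z$ of order $l$ and its $l$ orthogonal idempotents all go through, giving a correct alternative proof.
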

\begin{proof} Using the Groebner--Shirshov technique  \cite{Bokut} we see that ordered monomials $x_1^{k_1}\cdots\cdots x_m^{k_m},$ $0\leq k_i\leq l-1,$ $1\leq k_i\leq m,$ form a basis of the algebra $Clg(l,m).$ In particular, $$\dim_F Clg(l,m)=l^m.$$
		
	For an arbitrary $1\leq i\leq m$ the mapping $$\varphi_i (x_j)=\xi^{\delta_{ij}} x_j, \ \ 1\leq j\leq m, $$ extends to an automorphism of the algebra $ Clg(l,m).$

	\begin{lemma}\label{lem_dop}  
		Let $V$ be a subspace of $Clg(l,m)$ that is invariant under $\text{Aut } Clg(l,m).$ Then $V$ is spanned by all ordered monomials lying in $V.$
	\end{lemma}
	\begin{proof} Let $v= \Sigma_{k=0}^{l-1} x_i^k  v_k \in V,$ where $v_k$ lies in the subalgebra generated by $x_1,$ $\ldots,$ $x_{i-1},$ $x_{i+1},$ $\ldots,$ $x_m.$ Then  $$\varphi_i^s (v)=\sum_{k=0}^{l-1}\xi^{sk} x_i^k v_k\in V, \ \ s=0,1,\ldots,l-1. $$ Viewing these $l$ equalities as a system of equations in $v_0,$ $x_i v_1,$ $\ldots,$  $x_i^{l-1} v_{l-1}$ with the Vandermonde determinant  $\det |\xi^{ij}|\neq 0$ we represent each $x_i^k v_k$  as a linear combination of $v,$ $\varphi_i(v),$ $\ldots,$ $\varphi_i^{l-1}(v).$  Hence each $x_i^k v_k$  lies in $V.$
		
		Repeating this argument several times we get the assertion of lemma. 
	\end{proof}
	
	\begin{corollary}\label{coll_1} 
		The algebra $Clg(l,m)$ does not contain proper ideals that are invariant under $\text{Aut }Clg(l,m).$ 
	\end{corollary}

	\begin{corollary}\label{coll_2} 
		The algebra $Clg(l,m)$ is semisimple.
	\end{corollary}
	
	Now our aim is  to find monomials  $x_1^{k_1}\cdots x_m^{k_m}$ that lie in the center $C$ of $Clg(l,m).$  By Lemma \ref{lem_dop} these  monomials span $C.$ Let $v=x_1^{k_1} x_2^{k_2} \cdots x_m^{k_m}\in C.$ We have $$ x_i^{-1} v x_i = \xi^{-k_1-k_2-\cdots -k_{i-1}+k_{i+1}+\cdots + k_m} v  . $$  Hence 
	\begin{equation}\label{eq_last}
	\begin{cases}
				k_2+\cdots + k_m & = 0  \mod l \\
		-k_1+k_3+\cdots + k_m & = 0  \mod l \\
		\ldots \ldots &  \ldots \ldots \\
		-k_1-k_2-\cdots - k_{m-1} & = 0  \mod l \\
		\end{cases}
		\end{equation}
		If $m$ is even then the $m\times m$ matrix $$K=\left(
		\begin{array}{cccccc}
		0 & 1 & 1 & \ldots & 1 & 1 \\
		-1 & 0 & 1 & \ldots & 1 & 1 \\
		\vdots  &  &  &  & & \\
		-1  & -1 & -1 & \ldots & 0 & 1 \\
		-1 & -1 & -1 & \ldots & -1 & 0 \\
		\end{array}
		\right)$$
		over $\mathbb{Z} / l \mathbb{Z}$ is invertible. Therefore in this case $v=1$ and $C=F\cdot 1.$ This proves the part 1) of the theorem. 
		
		If $m$ is odd then the rank of $K$ is $m-1$ and the system (\ref{eq_last}) has  $l$ solutions $(i,-i,i,-i,\ldots,i),$  $i=0,1,\ldots, l-1.$  Hence $\dim_F C=l. $ This implies that the algebra $Clg(l,m)$  is a direct sum of $l$ simple summands,  $Clg(l,m)=A_1 \oplus \cdots \oplus  A_l. $  Automorphisms of $Clg(l,m)$ permute the ideals $A_1,$ $\ldots,$ $A_l .$ By Corollary \ref{coll_1} of the Lemma \ref{lem_dop} they are conjugate under $\text{Aut }Clg(l,m),$ hence isomorphic. Since $\dim_F Clg(l,m)=l\cdot \dim_F A_1 =l^m ,$ it follows that $\dim_F A_1 =l^{m-1} ,$ $A_1 \cong  M_{l^{\frac{m-1}{2}}} (F).$ This proves the part 2) of the theorem.
	\end{proof}
\begin{theorem}
	\label{GenCl}
	The Steinitz number of a unital locally matrix algebra   $Clg(l,I)$, where the set $I$ is infinite,  is $l^{\infty }$.	
\end{theorem}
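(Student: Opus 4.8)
The plan is to reproduce the argument of the proof of Theorem \ref{dimCl}, with Theorem \ref{th6_1} playing the role of Theorem \ref{jacob}, and then to add a short argument ruling out any prime different from $l$ in $\mathbf{n}(Clg(l,I))$.

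First I would establish local matricity. Recall from the proof of Theorem \ref{th6_1} that the ordered monomials $x_{i_1}^{k_1}\cdots x_{i_r}^{k_r}$ form a basis of $Clg(l,I)$; applying the same Groebner--Shirshov computation to a subset $J\subseteq I$ shows that the ordered monomials supported on $J$ are linearly independent and span a subalgebra of $Clg(l,I)$ naturally isomorphic to $Clg(l,J)$, whose unit (the empty product) coincides with $1$ of $Clg(l,I)$. Given a finite set $S\subseteq Clg(l,I)$, only finitely many indices occur among the finitely many monomials needed to express the elements of $S$, so $S\subseteq Clg(l,J)$ for some finite $J\subseteq I$. Enlarging $J$ by one index if necessary, we may assume $|J|=m$ is even, whence $Clg(l,J)\cong M_{l^{m/2}}(F)$ by part~1) of Theorem \ref{th6_1}. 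Since this subalgebra contains $1$, the algebra $Clg(l,I)$ is locally matrix.

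Next I would pin down $D(Clg(l,I))$. On one hand, since $I$ is infinite, for every even $m$ there is a finite $J\subseteq I$ with $|J|=m$, and $Clg(l,J)\cong M_{l^{m/2}}(F)$ is a unital subalgebra of $Clg(l,I)$, so $l^{m/2}\in D(Clg(l,I))$; hence $l^{k}$ divides $\mathbf{n}(Clg(l,I))$ for every $k\ge 1$, i.e. $l^{\infty}\mid \mathbf{n}(Clg(l,I))$. On the other hand, suppose $M_n(F)\subseteq Clg(l,I)$ with $1$ lying in that copy of $M_n(F)$. Its $n^2$ matrix units involve only finitely many indices, so they all lie in some $Clg(l,J)$ with $|J|=m$ even, yielding a unital inclusion $M_n(F)\subseteq Clg(l,J)\cong M_{l^{m/2}}(F)$. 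By the fact recalled in the Preliminaries (if $1\in M_n(F)\subseteq M_{m'}(F)$ then $n\mid m'$), we get $n\mid l^{m/2}$, so $n$ is a power of $l$. Thus every element of $D(Clg(l,I))$ is a power of $l$, no prime $\ne l$ divides $\mathbf{n}(Clg(l,I))$, and combining the two bounds gives $\mathbf{n}(Clg(l,I))=l^{\infty}$.

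All of this is routine once the basis of $Clg(l,I)$ and the structure statement of Theorem \ref{th6_1} are in hand; the only point needing a little care is the upper bound, namely checking that a matrix subalgebra can be pushed down to a finite $Clg(l,J)$ with the unit preserved, so that the numerical constraint $n\mid l^{m/2}$ from the Preliminaries applies. I do not expect a genuine obstacle beyond this bookkeeping.
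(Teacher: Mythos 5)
Your proof takes essentially the same approach as the paper: express any finite subset $S$ (or, for the upper bound, a set of matrix units) inside a finitely generated subalgebra $Clg(l,J)$ with $|J|$ even, and then apply Theorem~\ref{th6_1} to identify $Clg(l,J)\cong M_{l^{|J|/2}}(F)$. The paper's proof is one short paragraph that establishes local matricity and that $l^{\infty}\mid\mathbf{n}(Clg(l,I))$, stopping there; you additionally spell out the verification that the unital copy of $Clg(l,J)$ inside $Clg(l,I)$ really shares the ambient identity, and — more importantly — you supply the reverse inequality by observing that any unital $M_n(F)\subseteq Clg(l,I)$ sits unitally inside some $M_{l^{m/2}}(F)$, so $n\mid l^{m/2}$ by the divisibility fact from the Preliminaries, hence $n$ is a power of $l$. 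That upper-bound step is left implicit in the paper, so your write-up is a bit more careful, but the route and the ingredients (Theorem~\ref{th6_1}, finiteness of support, the $1\in M_n\subseteq M_m\Rightarrow n\mid m$ fact) are the same.
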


\begin{proof}
	For an arbitrary finite subset $S$ of $Clg(l,I)$ there exists a finite subset $J \subset  I$,  such that $S \subset  Clg(l,J).$  Without loss of generality, we can assume that $\mathfrak{Card}  J=m$ is an even number.Then by Theorem \ref{th6_1}  $Clg(l,J) \cong  M_{l^{\frac{m}{2}}} (F) .$ This completes the proof of the theorem.	
	\end{proof}

\begin{theorem}
	If $\tau=p_1^{k_1}p_2^{k_2} p_3^{k_3} \ldots$ is a Steinitz number, such that $k_j=\infty$ for some positive integer $j$, then for any infinite dimension $\alpha$ there is a unital locally matrix algebra $A$, such that $\dim_F A= \alpha$ and $\mathbf{n}(A)=\tau$.	
\end{theorem}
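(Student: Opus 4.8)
The hypothesis $k_j=\infty$ enters only through the identity $\tau\cdot p_j^{\infty}=\tau$ (the exponent of $p_j$ in $\tau$ is already $\infty$, and $\infty+\infty=\infty$): it is this that lets us enlarge the dimension of a model of $\tau$ by tensoring with matrix algebras without disturbing the Steinitz number. Accordingly, the plan is to fix a countable dimensional unital locally matrix algebra $C$ with $\mathbf{n}(C)=\tau$, build a suitably large unital infinite tensor power $B$ of $M_{p_j}(F)$ (this is where the target cardinal $\alpha$ is produced), and take $A=B\otimes_F C$.

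In detail: first, by the classical construction underlying Theorem~\ref{teorBOS} one picks positive integers $d_1\mid d_2\mid\cdots$ with $\operatorname{lcm}_i d_i=\tau$ (such a chain clearly exists, e.g.\ $d_n=\prod_{k\le n}p_k^{\min(r_{p_k},n)}$ if $\tau=\prod_p p^{r_p}$) and sets $C=\varinjlim_i M_{d_i}(F)$ along the unital embeddings; then $C$ is a unital locally matrix algebra with $\mathbf{n}(C)=\tau$ and $\dim_F C=\aleph_0$, the latter because $\tau\notin\mathbb{N}$. Second, fix a set $\Lambda$ of cardinality $\alpha$ and let $B=\bigotimes_{\lambda\in\Lambda}M_{p_j}(F)$ be the unital infinite tensor product, i.e.\ the direct limit $\varinjlim_{S}\bigl(\bigotimes_{\lambda\in S}M_{p_j}(F)\bigr)$ over the directed set of finite subsets $S\subseteq\Lambda$ with connecting maps $a\mapsto a\otimes 1$. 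Each $\bigotimes_{\lambda\in S}M_{p_j}(F)\cong M_{p_j^{|S|}}(F)$ is a matrix subalgebra of $B$ containing $1_B$, and every finite subset of $B$ lies in one of them, so $B$ is a unital locally matrix algebra; moreover a matrix algebra $M_n(F)$ embeds unitally into $B$ iff it embeds unitally into some $M_{p_j^{|S|}}(F)$, i.e.\ iff $n\mid p_j^{|S|}$ for some finite $S$, i.e.\ iff $n$ is a power of $p_j$. Hence $D(B)=\{1,p_j,p_j^2,\dots\}$ and $\mathbf{n}(B)=p_j^{\infty}$. Finally $\dim_F B=\alpha$: the subspaces $\bigotimes_{\lambda\in S}M_{p_j}(F)$ are finite dimensional and indexed by the $\alpha$ finite subsets of the infinite set $\Lambda$, so $\dim_F B\le\aleph_0\cdot\alpha=\alpha$; conversely the matrix units $e_{12}^{(\lambda)}$ coming from the individual factors ($\lambda\in\Lambda$) have pairwise disjoint supports with respect to the basis of $B$ made of ordered products of matrix units of the factors, hence form an $F$-linearly independent family, so $\dim_F B\ge\alpha$.

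Now set $A=B\otimes_F C$. By Proposition~\ref{tensor}, $A$ is a unital locally matrix algebra and
$$\mathbf{n}(A)=\mathbf{n}(B)\cdot\mathbf{n}(C)=p_j^{\infty}\cdot\tau=\tau,$$
the last equality because the exponent of $p_j$ in $\tau$ equals $\infty$. Moreover $\dim_F A=\dim_F B\cdot\dim_F C=\alpha\cdot\aleph_0=\alpha$ since $\alpha$ is infinite, so $A$ is the algebra sought. There is no serious obstacle here; the only points that require care are verifying that the infinite tensor power $B$ is genuinely a unital locally matrix algebra with $\mathbf{n}(B)=p_j^{\infty}$ (so that Proposition~\ref{tensor} is applicable) and pinning down $\dim_F B=\alpha$ exactly — both routine once the direct limit is organised as above — while all the substance is carried by Proposition~\ref{tensor} together with the countable dimensional classification.
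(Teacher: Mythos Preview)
Your proof is correct and follows the same global strategy as the paper: write $A$ as a tensor product of a countable-dimensional algebra carrying (most of) the Steinitz number with an $\alpha$-dimensional algebra of Steinitz number $p_j^{\infty}$, then invoke Proposition~\ref{tensor}. The difference lies in how the $\alpha$-dimensional factor is built. The paper uses the generalized Clifford algebra $Clg(p_j,I)$ with $|I|=\alpha$, whose locally matrix structure and Steinitz number are established earlier in Theorems~\ref{th6_1} and~\ref{GenCl}; this relies on the standing hypothesis that $F$ is algebraically closed (so that a primitive $p_j$-th root of unity exists) and that $p_j$ is coprime to $\operatorname{char} F$. Your infinite tensor product $\bigotimes_{\lambda\in\Lambda}M_{p_j}(F)$ is more elementary, needs nothing beyond $M_{p_j}(F)\otimes_F M_{p_j}(F)\cong M_{p_j^2}(F)$, and works over an arbitrary field, so it is in fact a cleaner and more general construction for this step. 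A minor further difference: the paper splits $\tau=\tau'\cdot p_j^{\infty}$ with $\tau'$ coprime to $p_j$ and takes the countable piece to realise $\tau'$, whereas you keep the full $\tau$ in the countable factor $C$ and use $\tau\cdot p_j^{\infty}=\tau$; both variants are fine.
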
 


\begin{proof}
Let $k_j=\infty$. Let   $\tau '=p_1^{k_1}p_2^{k_2}\ldots p_{j-1}^{j-1} p_{j+1}^{j+1} \ldots$. In \cite{Sushch2} it is proved that there exists a countable dimensional locally matrix algebra $A'$, such that $\mathbf{n}(A')=\tau'$.

Let $I$ be an  ordered set of cardinality $\alpha$, let  $A''=Clg(p_j,I)$. Then   $\mathbf{n}(A'')=p_j^{\infty}$ and $\mathbf{n}(A'\otimes A'')= \tau.$ It is easy to see that $\dim_F A'\otimes_F A''=\alpha$.	
\end{proof}

A Steinitz number $\tau=\prod_{p\in \mathbb{P}} p^{r_p} $ is called { \it locally finite}  if  $r_p < \infty $ for any $p \in  \mathbb{P} $.  

\begin{conjecture}
	If $\tau$ is a locally finite Steinitz number, $A$ is a unital locally matrix algebra and $\mathbf{n}(A)=\tau$,  then $A$  is countable dimensional.	
\end{conjecture}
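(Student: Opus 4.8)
The plan is to reduce the conjecture to a rigidity (co-Hopficity) statement about the countable-dimensional algebra carrying the given Steinitz number, and then to attack that statement via a descending chain of centralizers; the final passage from a ``limiting'' exhaustion to an honest equality is where I expect the real difficulty.

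If $\tau\in\mathbb N$ the conclusion is immediate: since any two unital matrix subalgebras of $A$ lie in a common one (the relation $m=nk$ recalled in the Preliminaries), $D(A)$ is directed by divisibility, so $\operatorname{lcm} D(A)=\tau$ forces $\tau$ to be the greatest element of $D(A)$; then $M_\tau(F)\subseteq A$ is a maximal unital matrix subalgebra and the local matrix property gives $A=M_\tau(F)$. So assume $\tau$ is infinite. As $D(A)\subseteq\mathbb N$ is countable and directed by divisibility, it contains a chain $e_1\mid e_2\mid\cdots$ cofinal in it, hence $\operatorname{lcm}_i e_i=\tau$; lifting it we obtain unital matrix subalgebras $M_{e_1}(F)\subseteq M_{e_2}(F)\subseteq\cdots$ of $A$, and we set $A_0=\bigcup_i M_{e_i}(F)$, a countable-dimensional unital locally matrix subalgebra with $\mathbf n(A_0)=\tau$. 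Everything reduces to showing $A_0=A$. If $A_0\neq A$, fix $a\in A\setminus A_0$; there is a countable-dimensional unital locally matrix subalgebra $A_1$ of $A$ with $A_0\cup\{a\}\subseteq A_1$, and from $\tau=\mathbf n(A_0)\mid\mathbf n(A_1)\mid\mathbf n(A)=\tau$ we get $\mathbf n(A_1)=\tau$, hence $A_1\cong A_0$ by Theorem~\ref{teorBOS}. Thus $A_0$ would be isomorphic to a proper unital subalgebra of itself. Conversely, any such self-embedding can be iterated along $\omega_1$, taking unions at limit ordinals (which preserves both the local matrix property and the Steinitz number), producing a strictly increasing $\omega_1$-chain of copies of $A_0$ whose union has uncountable dimension and Steinitz number $\tau$. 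So the conjecture is equivalent to the assertion $(\star)$: a countable-dimensional unital locally matrix algebra with a locally finite Steinitz number admits no non-surjective unital injective endomorphism.

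To attack $(\star)$, let $\varphi\colon A_0\hookrightarrow A_0$ be unital and injective, with $A_0=\bigcup_i M_{e_i}(F)$ as above. For each $i$ the double centralizer relation gives $A_0\cong M_{e_i}(F)\otimes_F Z_i$, where $Z_i$ is the centralizer of $\varphi(M_{e_i}(F))$ in $A_0$; here each $Z_i$ is again unital locally matrix with $\mathbf n(Z_i)=\tau/e_i$, one has $Z_1\supseteq Z_2\supseteq\cdots$, and $\varphi$ is surjective if and only if the injective unital map $Z_{A_0}(M_{e_i}(F))\to Z_i$ induced by $\varphi$ is surjective for every $i$. Now $\bigcap_i Z_i$ is the centralizer of $\varphi(A_0)$ in $A_0$; being contained in each $Z_i$, its Steinitz number divides $\gcd_i(\tau/e_i)=\tau/\operatorname{lcm}_i e_i=\tau/\tau$, and this equals $1$ precisely because $\tau$ is locally finite (no prime occurs in $\tau$ with infinite exponent). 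Granting the auxiliary fact that the centralizer of a countable-dimensional unital locally matrix subalgebra is again locally matrix — together with the remark that a locally matrix algebra with Steinitz number $1$ is $F\cdot 1$ — one concludes $Z_{A_0}(\varphi(A_0))=F\cdot 1$, i.e. the subalgebras $\varphi(M_{e_i}(F))$ exhaust $A_0$ in the limit. The main obstacle is to promote this to the equality $\varphi(A_0)=A_0$: one must show that every unital matrix subalgebra $M_n(F)\subseteq A_0$ can be conjugated onto a subalgebra of some $\varphi(M_{e_i}(F))$ by a unit of $\varphi(A_0)$ rather than merely by a unit of $A_0$ — equivalently, that a copy of $A_0$ cannot sit as a proper unital subalgebra inside another copy of $A_0$ when $\mathbf n(A_0)$ is locally finite. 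The failure of the analogous statement when some exponent of the Steinitz number is infinite, witnessed by $M_{2^\infty}\cong M_2(F)\otimes_F M_{2^\infty}$ and the resulting proper self-embedding, shows that local finiteness has to be exploited here beyond the bookkeeping of exponents. A natural route is to suppose $(\star)$ fails, pass to the $\omega_1$-chain above, and extract from its centralizer data a prime forced to occur in $\tau$ with infinite exponent, contrary to hypothesis; turning this into a rigorous contradiction is the crux of the argument.
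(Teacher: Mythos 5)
The statement you are attacking is stated in the paper as a \emph{conjecture}, not a theorem: the authors offer no proof of it, so there is nothing in the paper to compare your argument against. More to the point, your proposal is not a proof either --- you say so yourself when you write that ``turning this into a rigorous contradiction is the crux of the argument.'' What you have written is a reduction and a partial attack, and it should be assessed as such.

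The reduction is, I think, correct and genuinely useful. Building a countable-dimensional $A_0\subseteq A$ with $\mathbf n(A_0)=\tau$ from a cofinal divisibility chain in $D(A)$ is sound, and the two-way equivalence between the conjecture and co-Hopficity of $A_0$ (in the category of unital locally matrix algebras) is established cleanly: the forward direction by the $\omega_1$-chain of copies of $A_0$ (each countable limit stage is again $\cong A_0$ by Theorem~\ref{teorBOS}, and the $\omega_1$-union has dimension $\aleph_1$ and Steinitz number $\tau$), and the converse by picking $a\in A\setminus A_0$ and enlarging to a countable $A_1\cong A_0$ properly containing $A_0$. This is a clean reformulation of the problem that the paper itself does not record.

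The attack on co-Hopficity, however, has two genuine gaps. First, you ``grant'' that $\bigcap_i Z_i=C_{A_0}(\varphi(A_0))$ is again a unital locally matrix algebra so that the Steinitz-number bookkeeping applies to it; that a descending intersection of unital locally matrix subalgebras is locally matrix is not obvious and is not established anywhere in the paper (it is a statement about intersections, not unions, and the local matrix property is only visibly preserved under directed unions). Second, and as you explicitly flag, the implication ``$C_{A_0}(\varphi(A_0))=F\cdot 1$ implies $\varphi(A_0)=A_0$'' is false in general for unital subalgebras, and you do not supply the mechanism that would make it true here; indeed the $\tau=2^\infty$ example you cite shows that the centralizer bookkeeping by itself cannot distinguish the locally finite case, so some further structural input is required. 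Until that step is supplied, what you have is a promising reduction plus a heuristic, not a proof --- which is consistent with the statement's status in the paper as an open conjecture.
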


\section{Non isomorphic  algebras with equal   Steinitz numbers}

We will  construct two  nonisomorphic unital locally matrix algebras of uncountable dimension, such  their  Steinitz numbers are equal.

Thus we show that Theorem \ref{univ1} in the case of  uncountable dimension is not true. Consider the vector space: 
$$V=\{(a_1, a_2, \ldots) \ | \ a_i \in \mathbb{C}, \sum_{i=1}^{\infty}|a_i|^2< \infty \}.$$ 
Let  $f$ be the  quadratic form:
$$f((a_1, a_2, \ldots))= \sum_{i=1}^{\infty}a_i^2 \in \mathbb{C}.$$
The  vector space $V$ has  uncountable dimension.

Assume that  $I$ is a set of indexes, whose  cardinality $ \mathfrak{Card}I$ is equal to the dimension of the vector space $V$. Let now $W$ be a  complex vector space with basis $w_i, i \in I$. Assume that $g$ is the quadratic form on $W$ determined for arbitrary $w=\alpha_1w_{i_1}+\alpha_2w_{i_2}+ \ldots+\alpha_nw_{i_n}$, $\alpha_i \in \mathbb{C}$  by the rule: 
$$g(w)= \sum_{i \in 1}^n\alpha_i^2.$$

Define Clifford algebras  $A=Cl(V,f)$ and   $B=Cl(W,g)$. As follows from their constructions, algebras  $A$ and $B$ are unital locally matrix and 
$$\dim_F A= \dim_F B=  \mathfrak{Card}  I.$$
In addition, from Theorem \ref{dimCl} it follows, that $$\mathbf{n}(A)=\mathbf{n}(B)=2^{\infty}.$$

\begin{theorem}
	\label{nonizom}
	Clifford algebras $A$ and $B$ are not isomorphic. 	
\end{theorem}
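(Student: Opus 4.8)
The plan is to pin down a property that the $F$-algebra $B$ has and $A$ does not: \emph{containing an uncountable set of pairwise anticommuting involutions}, i.e. elements $x$ with $x^{2}=1$ and $xy=-yx$ for any two distinct members $x,y$ of the set. Since in a Clifford algebra $w^{2}=g(w)\cdot 1$ and $ww'+w'w=g(w,w')\cdot 1$, the basis vectors $w_i$ of $W$ satisfy $w_i^{2}=g(w_i)\cdot 1=1$ and $w_iw_j=-w_jw_i$ for $i\neq j$, and $\mathfrak{Card}\{w_i:i\in I\}=\mathfrak{Card}\,I$ is uncountable; so $B$ has the property. An isomorphism $\varphi\colon A\to B$ would carry $\{\varphi^{-1}(w_i)\}_{i\in I}$ to such a family in $A$, so it suffices to prove that $A$ contains no uncountable set of pairwise anticommuting involutions.

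For this I would use the elementary identity $(u-v)^{2}=u^{2}-uv-vu+v^{2}=2\cdot 1$, valid for any two anticommuting involutions $u,v$. In a submultiplicative norm with $\|1\|=1$ this gives $2=\|(u-v)^{2}\|\le\|u-v\|^{2}$, hence $\|u-v\|\ge\sqrt2$. Therefore an algebra which embeds into a \emph{separable} $C^{*}$-algebra cannot contain an uncountable set of pairwise anticommuting involutions, since their images would form an uncountable $\sqrt2$-separated subset of a separable metric space. So the task reduces to realizing $A=Cl(V,f)$ inside a separable $C^{*}$-algebra, and this is exactly the point where the choice $V=\ell^{2}$, rather than an arbitrary space with a basis of cardinality $\mathfrak{Card}\,I$, is essential. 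Concretely: $Cl(\ell^{2},f)$ carries the involution determined by $w^{*}=\bar w$ on vectors; giving each $Cl(V_0,f)\cong M_{2^{\dim V_0/2}}(\mathbb{C})$, over a cofinal family of conjugation-invariant nondegenerate finite-dimensional subspaces $V_0\subseteq V$ of even dimension, its unique $C^{*}$-norm, one obtains a $C^{*}$-norm on $A$ whose completion $\mathfrak A$ is the CAR algebra of the Hilbert space $\ell^{2}$.

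It remains to see that $\mathfrak A$ is separable. Fix a Hilbert basis $\{e_n\}_{n\in\mathbb{N}}$ of $\ell^{2}$ and let $A_0$ be the countable-dimensional subalgebra of $A$ generated by $1$ and the $e_n$. A short computation with the involution yields the two-sided estimate $\|w\|_2\le\|w\|_{\mathfrak A}\le\sqrt2\,\|w\|_2$ for every vector $w\in\ell^{2}$; consequently the Hilbert-space partial sums of any $v=\sum_n v_ne_n\in\ell^{2}$ converge to $v$ in $\mathfrak A$, so $\ell^{2}\subseteq\overline{A_0}$ and hence $\mathfrak A=\overline{A_0}$. Since $A_0$ has countable dimension it is norm-separable, and so is $\mathfrak A$; this finishes the argument. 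I expect the delicate part to be precisely this last block — setting up the $C^{*}$-norm on $Cl(\ell^{2},f)$ (the conjugation-invariant exhaustion, the norm comparison on vectors) and identifying the completion with the CAR algebra of a \emph{separable} Hilbert space. The contrast with $B=Cl(W,g)$, which cannot be embedded in any separable normed algebra because it already contains the $\sqrt2$-separated family $\{w_i\}_{i\in I}$, is the structural reason these two Clifford algebras — of equal uncountable dimension and with the same Steinitz number $2^{\infty}$ — are nonisomorphic.
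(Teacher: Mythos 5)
Your proof is correct, but it takes a genuinely different route from the paper's. The paper distinguishes $A$ and $B$ by a purely algebraic invariant about centralizers: it shows (Proposition~\ref{prop2_non}, via Lemma~\ref{chain} on chains of subspaces and Lemma~\ref{lema 2.2} on centralizers of anticommuting generators) that the countable set $\mathcal V=\{v_i\}_{i\ge 1}\subset A$ has centralizer $\mathbb{C}\cdot 1$, whereas in $B$ every countable subset $X$ already lives in $Cl(W_0,g)$ for a countable-dimensional $W_0$, so $w_iw_j$ with $i,j\notin I_0$ centralizes $X$ (Proposition~\ref{prop3_non_is}). You instead use the invariant ``contains an uncountable family of pairwise anticommuting involutions'': $B$ visibly has one (the $w_i$), and you exclude it from $A$ by embedding $A=Cl(\ell^2,f)$ in the separable CAR $C^{*}$-algebra and observing that anticommuting involutions $u,v$ satisfy $(u-v)^2=2\cdot 1$, hence $\|u-v\|\ge\sqrt2$, so any such family in a separable normed algebra is countable. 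The steps hold up: the upper bound $\|w\|_{\mathfrak A}\le\sqrt2\,\|w\|_2$ on vectors makes the partial sums Cauchy, the universal property of the Clifford algebra gives a unital homomorphism $A\to\mathfrak A$, and simplicity of the locally matrix algebra $A$ makes it injective; the lower bound you state is not actually needed for the argument. The trade-off: the paper's centralizer argument is elementary and self-contained (no topology, no $C^{*}$-theory), while your argument is shorter if one grants standard CAR-algebra facts, and it isolates more transparently the structural reason the Hilbert-space choice $V=\ell^2$ matters, namely that $Cl(\ell^2,f)$ is $C^*$-completable to a separable object whereas $Cl(W,g)$ is metrically too spread out to sit inside any separable normed algebra.
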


To prove this theorem we need some lemmas. 

Let $V$ be a  vector space  with a  quadratic form $f$. Then there is the natural $\mathbb{Z}/ 2\mathbb{Z}$-gradation  of the Clifford algebra $Cl(V,f)$:
$$Cl(V,f)=Cl(V,f)_{\bar{0}}+Cl(V,f)_{\bar{1}},$$
where subspaces  $Cl(V,f)_{\bar{0}}$ and $Cl(V,f)_{\bar{1}}$ are spanned by all even $\underbrace{V \cdot V \cdot \ldots  \cdot V}_{2i}$ and all odd  $\underbrace{V \cdot V \cdot \ldots \cdot V}_{2i+1}$ products respectively.

\begin{lemma}
	\label{chain}
	Let $V=V_0 \supset V_1 \supset V_2 \supset \ldots$ be a	descending chain of subspaces $dim_{\mathbb{C}}(V/ V_i) < \infty$, $i \ge 0$,  and $\bigcap_{i \ge 0} V_i = (0)$. Then  $$ \bigcap_{i \ge 0}Cl( V_i) = \mathbb{C} \cdot 1. $$
\end{lemma}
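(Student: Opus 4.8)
The plan is to reduce the infinite intersection to a statement about a single subspace of finite codimension, using the fact that the Clifford algebra of $V$ is ``generated locally'' by the Clifford algebras of finite-codimensional subspaces together with a dimension/grading bookkeeping argument. First I would fix an element $z \in \bigcap_{i\ge 0} Cl(V_i)$ and aim to show $z \in \mathbb{C}\cdot 1$. Since $z \in Cl(V_0) = Cl(V)$, the element $z$ is a finite linear combination of ordered monomials in some basis of $V$, hence $z$ already lies in $Cl(U)$ for some finite-dimensional subspace $U \subseteq V$. The key point I would establish is the following local statement: if $U$ is finite-dimensional and $V'$ is a subspace of finite codimension with $f|_{V'}$ nondegenerate, then $Cl(U) \cap Cl(V') = Cl(U\cap V')$ inside $Cl(V)$; more generally I want to control $Cl(U)\cap Cl(V')$ when $U$ is a fixed finite-dimensional space and $V'$ runs over the chain. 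Intersecting over $i$, and using $\bigcap_i V_i = (0)$, forces the monomials surviving in $z$ to involve no basis vectors at all, i.e. $z \in \mathbb{C}\cdot 1$.

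The main technical step — and the one I expect to be the principal obstacle — is proving that intersection formula $Cl(U)\cap Cl(V_i) = Cl(U\cap V_i)$, or at least enough of it to run the argument. The natural approach is to choose a basis adapted to the chain: because each $V/V_i$ is finite-dimensional, one can, after possibly passing to a refinement, pick a basis $\{v_j\}_{j\in J}$ of $V$ such that each $V_i$ is spanned by a cofinite subset $\{v_j : j \in J_i\}$ with $J_0 = J \supseteq J_1 \supseteq J_2 \supseteq \cdots$ and $\bigcap_i J_i = \emptyset$ (the last equality is exactly $\bigcap_i V_i = (0)$; one must check such a ``staircase'' basis can be arranged, using that the quotients are finite-dimensional — this is where a little care with the nondegeneracy of $f$ restricted to the relevant subspaces, as invoked in Theorem \ref{dimCl}, may be needed, or alternatively one works with the linear structure only since the lemma as stated does not require nondegeneracy). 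With such a basis, the ordered monomials $v_{j_1}\cdots v_{j_k}$ with $j_1 < \cdots < j_k$, $j_r \in J_i$, form a basis of $Cl(V_i)$ consisting of a subset of a fixed basis of $Cl(V)$, so membership of $z$ in $Cl(V_i)$ for all $i$ means every monomial appearing in $z$ uses only indices in $\bigcap_i J_i = \emptyset$, giving $z \in \mathbb{C}\cdot 1$.

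A cleaner alternative, which I would present if the staircase-basis construction proves awkward, is to argue by finite-dimensional reduction directly: write $z \in Cl(U)$ with $\dim_{\mathbb C} U = n < \infty$; since $\dim_{\mathbb C}(V/V_i) < \infty$ and $\bigcap_i V_i = 0$, for a suitable $i$ we have $U \cap V_i = (0)$ — indeed the subspaces $U\cap V_i$ form a descending chain inside the finite-dimensional space $U$ with trivial intersection, hence stabilize at $(0)$. Now I would invoke the grading and a counting argument: pick a basis of $V$ extending one of $V_i$ in which $U$ is in ``general position,'' express $z$ simultaneously in the monomial basis coming from $V_i$ (possible since $z \in Cl(V_i)$) and in a monomial basis involving the coordinates of $U$, and compare supports; the only monomial common to both descriptions with $U\cap V_i = (0)$ is the empty monomial $1$. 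The delicate bookkeeping here is making precise that ``$z$ expressed using $V_i$-coordinates has no component along $U$-directions,'' which is where the argument really uses $U \cap V_i = (0)$ together with the linear independence of ordered monomials (the PBW-type basis for Clifford algebras recalled before Theorem \ref{jacob}). Either route yields $\bigcap_{i\ge 0} Cl(V_i) = \mathbb{C}\cdot 1$.
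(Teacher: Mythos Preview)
Your staircase-basis route is essentially the paper's strategy, but you are missing one step that the paper carries out explicitly and that is genuinely necessary: a reduction to countable dimension. A basis $\{v_j\}_{j\in J}$ of $V$ with each $V_i$ spanned by the cofinite subset $\{v_j : j\in J_i\}$ and $\bigcap_i J_i=\emptyset$ can exist only when $J$ is countable, since each $J\setminus J_i$ has size $\dim_{\mathbb C}(V/V_i)<\infty$ and $\bigcup_i (J\setminus J_i)=J$. In the intended application $V$ is uncountable-dimensional, so this is not a detail. The paper fixes this by first choosing, for a hypothetical $a\in\bigcap_i Cl(V_i)\setminus\mathbb C\cdot 1$, finite-dimensional $W_i\subset V_i$ with $a\in Cl(W_i)$, setting $V'=\sum_i W_i$ (countable-dimensional) and $V'_i=V'\cap V_i$, and then running the staircase construction inside $V'$. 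After that reduction your sketch matches the paper's argument closely.

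Your alternative route, on the other hand, is correct and is in fact cleaner than the paper's proof: it needs no countable reduction and no inductive basis construction. Once $z\in Cl(U)$ with $\dim_{\mathbb C}U<\infty$, the chain $U\cap V_i$ is a descending chain of subspaces of the finite-dimensional $U$ with trivial intersection, hence $U\cap V_i=(0)$ for some $i$. Now choose a basis $e_1,\dots,e_n$ of $U$, extend (using $U\cap V_i=(0)$) to a basis $e_1,\dots,e_m$ of $V$ modulo $V_i$, and adjoin any basis $\{f_\alpha\}$ of $V_i$; in the ordered-monomial basis of $Cl(V,f)$ coming from $\{e_1,\dots,e_m\}\cup\{f_\alpha\}$, the subalgebras $Cl(U)$ and $Cl(V_i)$ are spanned by disjoint sets of basis monomials except for the empty product $1$, so $Cl(U)\cap Cl(V_i)=\mathbb C\cdot 1$. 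That is exactly the ``compare supports'' step you allude to, and it requires no nondegeneracy hypothesis on $f$. If you lead with this argument, the lemma is immediate; the staircase construction (and the countable reduction it requires) is then unnecessary.
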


\begin{proof}
	Let us show that without loss of generality we can assume the space $V$ to be countable dimensional. Indeed, let $$a \in \bigcap_{i \ge 0} Cl(V_i), \qquad a \notin \mathbb{C} \cdot 1 .$$ For an arbitrary ${i \ge 0}$ there exists a finite dimensional subspace $W_i \subset V_i$ such that $a\in Cl(W_i)$. Let $V'=\sum_{i \ge 0} W_i$, $dim_{\mathbb{C}}V'=\varkappa_0$, and let $V'_i=V_i \cap V'$. Then 
	$V'=V'_0 \supset V'_1 \supset \ldots $,   $\bigcap_{i \ge 0} V'_i = (0)$ and $ \bigcap_{i \ge 0}Cl( V'_i) \ne \mathbb{C} \cdot 1 $.
	
	Assume therefore that the space $V$ is countable dimensional. Let $U=\{ u_0, u_1, \ldots \}$ be a basis of $V$. 
	
	Without loss of generality we assume that $u_0 \notin V_1$. 
	
	For any finite dimensional subspace $V' \subset V$ there clearly exists $n \ge 1$ such that $V' \cap V_n=(0).$ We will construct an ascending chain of finite subsets $U_0 \subset U_1 \subset \ldots$ of the basis $U$ and an increasing sequence of integers $n_1 < n_2 < \ldots$ such that
\begin{enumerate}[(1)]
		\item 
		\label{bas_1} 
		$u_0, u_1, \ldots, u_k \in U_k$,
		
	\item 
	\label{bas_2}
	$U_k$ is a basis of the space $V$ modulo $V_{n_{k+1}}$, for any $k \ge 0$.	
\end{enumerate}

Let $U_0$ be a maximal subset of $U$ such that $U_0$ is linearly independent modulo $V_1$ and $u_0 \in U_0$. Let $n_1=1$.

Suppose that subsets $U_0 \subset U_1 \subset \ldots U_k \subset U$ and integers $1=n_1< \ldots <n_{k+1}$ have been selected; the subset $U_k$ is a basis of $V$ modulo $V_{n_{k+1}}$; $u_0, \ldots, u_k \in U_k$.

Let $l \ge k+1$ be a smallest integer such that $u_l \notin U_k$; $u_0, u_1, \ldots, u_{l-1} \in U_k$. There exists an integer $n_{k+2}> n_{k+1}$ such that $U_k \cup \{u_l\}$ is linearly independent modulo $V_{n_{k+2}}$. Let $U_{k+1}$ be a maximal subset of $U$ that is linearly independent modulo $V_{n_{k+2}}$ and $U_k \cup \{u_l\} \subseteq U_{k+1}$. Clearly, $U_{k+1}$ is a basis of $V$ modulo $V_{n_{k+2}}$ and  $u_0, \ldots, u_{k+1} \in U_{k+1}$. Considering the subspaces $V_{n_{k}}$ instead of $V_k$, $k \ge 1$, we can assume that $U_k$ is a basis of $V$ modulo $V_{k+1}$, $k \ge 0$.

	Now we will construct a new ordered basis of $V$. Let $B_0=U_0$. Let $U_1=U_0 \cup \{w_1, \ldots, w_k\}$. For each $w_i$, $1 \le i \le k$, there exists an element $w'_i \in Span U_0$, such that $(w_i-w'_i) \in V_1$. Let $$B_1=\{ w_1-w'_1, \ldots,     w_k-w'_k \} \subset V_1.$$ Clearly, $Span(B_0 \cup B_1)=Span U_1.$ Continuing in this way we construct disjoint finite set $B_1, B_2, \ldots $ such that $B_n \subset V_n$ and $Span(B_0 \cup B_1 \cup \ldots B_n)=Span U_n$, $n \ge 0$. Hence $B=\bigcup _{i \ge 0}B_i$ is a basis of $V$.
	
	Let all elements in $B_j$ be greater than all elements in $B_i$, $i<j$. For each $i \ge 0$ elements in $B_i$ are ordered in an arbitrary way. 
	
 Strictly ordered products of elements from $B$ form a basis  of the Clifford algebra  $Cl( V,f)$. Ordered products $b_1b_2 \ldots b_t$, where $b_1, \ldots , b_t \in B \cap V_i$, form a basis of $Cl( V_i,f)$. This implies the assertion of the Lemma. 
\end{proof}

Let now $v$ be an element of $V$, such that $f(v) \neq 0$. Define a set 
$$v^{\perp}= \{v' \in V \ | \ f(v,v')=0\}.$$
It is clear, that  
\begin{equation}
\label{ortogon}
V=\mathbb{C} v + v^{\perp}. 
\end{equation} 

By  $C(v)$ we denote the centralizer of the element  $v$ in the Clifford algebra  $Cl(V,f)$, i.e. $C(v)= \{a \in Cl(V,f) \ | \ v\cdot a=a\cdot v \}$.

\begin{lemma}
	\label{lema 2.2}
	  
	\begin{enumerate}[(A)]
		\item 
		\label{centr_ort_1}
		If $v \in V$, $f(v)=1, $ then $$	C(v)=v Cl(v^{\perp},f)_{\bar{0}} +  Cl(v^{\perp},f)_{\bar{0}}.$$
		
		\item 
		\label{centr_ort_2}
		Let $n$ be an odd number. Then for the elements  $v_i=(a_{i1},a_{i2}, \ldots, a_{ii},\ldots)$, $1 \le i \le n$, such that   $a_{ij}=\begin{cases}
		1, \text{if  $j=i$}\\
		0, \text{if $j \ne i$}
		\end{cases},$    we have 
		\begin{multline*}
		C(v_1) \cap C(v_2) \cap \ldots \cap  C(v_n)= \\= v_1 v_2 \ldots v_n Cl(v_1^{\perp} \cap v_2^{\perp} \cap \ldots \cap v_n^{\perp},f)_{\bar{0}}+ 
		Cl(v_1^{\perp} \cap v_2^{\perp} \cap \ldots \cap v_n^{\perp},f)_{\bar{0}}.  \end{multline*}			
	\end{enumerate} 	
\end{lemma}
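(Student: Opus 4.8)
The plan is to realise both centralisers as fixed subspaces of suitable involutive automorphisms and to read off the answer from a basis of $Cl(V,f)$ adapted to the orthogonal decomposition \eqref{ortogon}.

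\emph{Part (A).} Since $f(v)=1$, we have $v^{2}=1$ in $Cl(V,f)$; by \eqref{ortogon} I would choose a basis of $V$ consisting of $v$ together with a basis of $v^{\perp}$, ordered so that $v$ precedes all the other vectors. Reading off the corresponding basis of ordered products gives the vector space decomposition $Cl(V,f)=Cl(v^{\perp},f)\oplus v\cdot Cl(v^{\perp},f)$, and refining each summand by the $\mathbb{Z}/2\mathbb{Z}$-grading of $Cl(v^{\perp},f)$,
$$Cl(V,f)=Cl(v^{\perp},f)_{\bar 0}\oplus Cl(v^{\perp},f)_{\bar 1}\oplus v\cdot Cl(v^{\perp},f)_{\bar 0}\oplus v\cdot Cl(v^{\perp},f)_{\bar 1}.$$
Because $f(v,v')=0$ for every $v'\in v^{\perp}$, the element $v$ anticommutes with every vector of $v^{\perp}$, hence commutes with $Cl(v^{\perp},f)_{\bar 0}$ and anticommutes with $Cl(v^{\perp},f)_{\bar 1}$; using $v^{2}=1$ and $\mathrm{char}\,F\neq 2$, a one-line computation on each of the four summands shows that $x$ commutes with $v$ exactly on the first and third summand, which is (A). The only subtlety is that $v$ is invertible, which guarantees $av\neq 0$ whenever $a\neq 0$, so that no nonzero element of $Cl(v^{\perp},f)_{\bar 1}$ or of $v\cdot Cl(v^{\perp},f)_{\bar 1}$ centralises $v$.

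\emph{Part (B).} From the definition of the $v_{i}$ and of $f$ one has $f(v_{i})=1$ and $f(v_{i},v_{j})=0$ for $i\neq j$, so $v_{i}^{2}=1$ and the $v_{i}$ pairwise anticommute. The key observation is that an element $x$ centralises $v_{i}$ if and only if $\varphi_{i}(x)=x$, where $\varphi_{i}\colon x\mapsto v_{i}xv_{i}$ is an involutive automorphism of $Cl(V,f)$ (here $v_{i}^{2}=1$ is used); thus $C(v_{1})\cap\cdots\cap C(v_{n})$ is the subspace of elements fixed by all $\varphi_{i}$, and the $\varphi_{i}$ pairwise commute since the $v_{i}$ do. Put $W=v_{1}^{\perp}\cap\cdots\cap v_{n}^{\perp}$ and $U=\mathrm{span}(v_{1},\dots,v_{n})$; since the $v_{i}$ are pairwise orthogonal and non-isotropic, $V=U\oplus W$ orthogonally. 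Choosing a basis of $V$ formed by $v_{1},\dots,v_{n}$ together with a basis of $W$, ordered so that $v_{1}<\cdots<v_{n}$ precede all basis vectors of $W$, I obtain
$$Cl(V,f)=\bigoplus_{S\subseteq\{1,\dots,n\}}\ \bigoplus_{\varepsilon\in\{0,1\}}v_{S}\cdot Cl(W,f)_{\bar\varepsilon},\qquad v_{S}=\textstyle\prod_{i\in S}v_{i}.$$
Because $\varphi_{i}$ is an algebra automorphism that fixes $v_{i}$, negates every other $v_{j}$, and negates every basis vector of $W$, it scales $v_{S}\cdot Cl(W,f)_{\bar\varepsilon}$ by $(-1)^{|S|-1+\varepsilon}$ if $i\in S$ and by $(-1)^{|S|+\varepsilon}$ if $i\notin S$. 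A summand lies in the common fixed subspace iff every $\varphi_{i}$ acts on it by $+1$; comparing the two signs forces $S=\varnothing$ or $S=\{1,\dots,n\}$, and in both cases $\varepsilon=0$ — in the case $S=\{1,\dots,n\}$ this uses that $n$ is odd, so $(-1)^{n-1}=1$. Since the common fixed subspace is the direct sum of precisely those summands on which all $\varphi_{i}$ act trivially, we conclude
$$C(v_{1})\cap\cdots\cap C(v_{n})=Cl(W,f)_{\bar 0}\ \oplus\ v_{1}v_{2}\cdots v_{n}\cdot Cl(W,f)_{\bar 0},$$
which is (B); taking $n=1$ also recovers (A).

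I expect the main obstacle to be the bookkeeping inside Part (B): one must justify carefully that $\varphi_{i}$ really acts as a scalar on each $v_{S}\cdot Cl(W,f)_{\bar\varepsilon}$ (this rests on $\varphi_{i}$ being an automorphism together with the fact that the chosen basis of $V$ is adapted both to $U\oplus W$ and to the $\pm 1$-eigenspaces of $\varphi_{i}|_{V}$) and then extract the exact condition $S\in\{\varnothing,\{1,\dots,n\}\}$, $\varepsilon=0$. Everything else — the ordered-monomial bases, the reduction to fixed subspaces, and the final assembly — is routine.
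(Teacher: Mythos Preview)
Your argument is correct and matches the paper's: both identify $C(v)$ with the fixed subspace of the involution $x\mapsto vxv$ (the paper's operator $U_v$) on an ordered-monomial basis adapted to \eqref{ortogon}, with the paper handling Part~(B) by induction on~(A) where you instead diagonalise all the $\varphi_i$ simultaneously over the $2^{n+1}$ summands. One small slip to fix: the $v_i$ \emph{anti}commute rather than commute, but your conclusion that the $\varphi_i$ commute is still right, since conjugation by $v_iv_j$ equals conjugation by $-v_jv_i$ and the sign drops out.
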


\begin{proof}
	The equality \eqref{ortogon} implies, that any element from $Cl(V,f)$ is equal to a  sum of products of the element   $v$ and some elements from $v^{\perp}$.
	
	Define the operator 	$U_v: Cl(V,f) \to Cl(V,f) $, via $x \mapsto vxv $. As $f(v)=1$ we have
	$$U_v^2(x)= v v xvv=x.$$
	Hence the operator $U_v^2 $ has two eigenvalues $-1$ and $1$.
	For any vector $u$ from $v^{\perp}$ we have $$uv+vu=f(u,v) \cdot 1=0.$$
	So $uv=- vu$. As $f(v)=1, $ we obtain the equality  $$U_v(u)=vuv=-uvv=-u.$$
	Hence any vector from $v^{\perp}$ is an eigenvector of the operator  $U_v$ with the eigenvalue $-1$. 
	
	Let now $u_1, u_2, \ldots, u_n$ be elements from the set $v^{\perp}$. For the reasons given above it follows that if $n$ is even, then  $u_1  u_2  \ldots  u_n$ is an eigenvector  of the operator  $U_v$ with the eigenvalue $1$.
	Therefore the element  $ u_1  u_2  \ldots  u_n$ of $Cl(V,f)$ is an element of the centralizer $C(v)$. Note, that  if $n$ is odd, then  $u_1  u_2  \ldots  u_n$ is an eigenvector  of the operator  $U_v$ with the eigenvalue $-1$, and so $u_1  u_2  \ldots  u_n$ is not an element of $C(v)$.  
	
Similarly, if  $n$ is even, then  $vu_1  u_2  \ldots \cdot u_n$ is an eigenvector  of the operator  $U_v$ with the eigenvalue $1$ and   $ v u_1  u_2  \ldots  u_n$ lies in $Cl(V,f)$ in the centralizer $C(v)$. If  $n$ is odd, then  $v u_1  u_2  \ldots u_n$  is not an element of $C(v)$.    Therefore $$	C(v)=v Cl(v^{\perp},f)_{\bar{0}} +  Cl(v^{\perp},f)_{\bar{0}},$$ 
	and the first statement is proved.
	
	As for any $i$ we have $f(v_i)=1$, the proof of the second statement follows  from the first statement by induction.  
\end{proof}  

Let $\mathcal{V}$ be the set $\{v_i=(\underbrace{0, \ldots, 0, 1,}_{i}0,\ldots), i \ge 1 \} $. 

\begin{proposition}
	\label{prop2_non}
	The centralizer of the set $\mathcal{V} $ in the algebra $A=Cl(V,f)$ is $\mathbb{C}\cdot 1.$
\end{proposition}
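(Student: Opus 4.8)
The plan is to show that any element $a$ lying in the centralizer of all the $v_i$ must actually be a scalar, by combining the structure result of Lemma \ref{lema 2.2} with the intersection-of-subspaces result of Lemma \ref{chain}. First I would observe that if $a \in Cl(V,f)$ centralizes every element of $\mathcal{V}$, then in particular $a$ centralizes $v_1, \ldots, v_n$ for every $n$; since $n$ here ranges over all positive integers, I want to use part \eqref{centr_ort_2} of Lemma \ref{lema 2.2} for odd $n$, which describes $C(v_1)\cap \cdots \cap C(v_n)$ as $v_1\cdots v_n\, Cl(W_n,f)_{\bar 0} + Cl(W_n,f)_{\bar 0}$, where $W_n = v_1^\perp \cap \cdots \cap v_n^\perp$. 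Thus $a$ lies in this space for every odd $n$.

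Next I would set up the descending chain of subspaces needed to invoke Lemma \ref{chain}. Put $V_n = v_1^\perp \cap \cdots \cap v_n^\perp$ for $n\ge 1$ and $V_0 = V$. Using the orthogonal decomposition \eqref{ortogon} repeatedly (the $v_i$ are pairwise orthogonal and each has $f(v_i)=1$), one checks that $\dim_{\mathbb C}(V/V_n) = n < \infty$ and that $V_0 \supset V_1 \supset \cdots$ is strictly descending. The key point is that $\bigcap_{n\ge 1} V_n = (0)$: an element orthogonal to every standard basis vector $v_i$ of $\ell^2$ is zero, since $f(w, v_i)$ picks out (twice) the $i$-th coordinate of $w$. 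So the chain satisfies the hypotheses of Lemma \ref{chain}, giving $\bigcap_{n\ge 0} Cl(V_n,f) = \mathbb{C}\cdot 1$.

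It remains to deduce that $a \in \bigcap_{n} Cl(V_n,f)$. From Lemma \ref{lema 2.2}\eqref{centr_ort_2}, for each odd $n$ we can write $a = v_1\cdots v_n\, b_n + c_n$ with $b_n, c_n \in Cl(V_n,f)_{\bar 0}$. Here the main obstacle is handling the term $v_1\cdots v_n b_n$: I want to argue that the $\mathbb{Z}/2\mathbb{Z}$-grading forces $b_n$ and $c_n$ to be the even and odd homogeneous components of $a$, and that as $n$ grows the odd component cannot survive — indeed $v_1\cdots v_n b_n$ involves the generators $v_1,\dots,v_n$ nontrivially (in a basis of ordered products, the monomials appearing in $v_1\cdots v_n b_n$ all contain the factor $v_1\cdots v_n$ while those in $c_n$ contain none of $v_1,\dots,v_n$), so comparing the fixed element $a$ against this decomposition for two different values of $n$ forces $b_n = 0$ once $n$ exceeds the "length" of $a$. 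Concretely, $a$ lies in $Cl(W,f)$ for some finite-dimensional $W$; choosing $n$ large enough that the projection argument rules out any contribution from $v_1\cdots v_n$, we get $a = c_n \in Cl(V_n,f)$ for all large odd $n$, hence (since the $V_n$ are decreasing) $a \in \bigcap_{n} Cl(V_n,f) = \mathbb{C}\cdot 1$. Finally, conversely $\mathbb{C}\cdot 1$ obviously centralizes $\mathcal V$, so the centralizer is exactly $\mathbb{C}\cdot 1$, as claimed.
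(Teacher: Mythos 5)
Your proposal is correct and follows essentially the same route as the paper: apply Lemma \ref{lema 2.2}\eqref{centr_ort_2} to decompose a centralizing element into an odd piece $v_1\cdots v_n b_n$ and an even piece $c_n\in Cl(V_n,f)_{\bar 0}$, kill the odd piece for large odd $n$ by a length/basis argument in the ordered-monomial basis adapted to $v_1,\dots,v_n$ and $V_n$, and then feed the even piece into Lemma \ref{chain} with the chain $V_n=v_1^\perp\cap\cdots\cap v_n^\perp$. The paper phrases the same argument slightly differently, by first noting that $C(\mathcal V)$ is itself $\mathbb Z/2\mathbb Z$-graded, showing $C_{\bar 1}=0$, and then identifying $C_{\bar 0}$ with $\bigcap_n Cl(V_n,f)_{\bar 0}$, but the content is identical.
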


\begin{proof}
	Consider a centralizer of the set $\mathcal{V}:$  $$C(\mathcal{V})=\bigcap_{i\geq 1} C(v_i).$$
	The subspace $C(\mathcal{V})$ of the algebra $Cl(V,f)$  is  $\mathbb{Z}/2\mathbb{Z}$--graded, i.e. $$C(\mathcal{V})=C_{\overline{0}}+C_{\overline{1}}.$$
	
Assume, that  $a\in C_{ \bar{1} }$. Then   
\begin{equation}
\label{rozklad}
 a \in \sum_{i=0}^{m}\underbrace{V \cdot V \cdot \ldots  \cdot V}_{2i+1}.
 \end{equation} Let $n$ be an odd number such, that $2m+1<n.$ By Lemma \ref{lema 2.2}, \eqref{centr_ort_2} $a \in v_1 v_2 \ldots v_n Cl(v_1^{\perp} \cap v_2^{\perp} \cap \ldots \cap v_n^{\perp}, f)_{\bar{0}}$. Let $\{w_j\}_{j \in J}$, be an ordered basis of the subspace $v_1^{\perp} \cap v_2^{\perp} \cap \ldots \cap v_n^{\perp}$. Then $v_1, \ldots, v_n, w_j$, $j \in J$ is a basis of $V$. The set of ordered products of elements of this basis is a basis of $Cl(V,f)$. Elements  $v_1 \ldots v_n w_{j_1}, \ldots w_{j_k}$, $j_1<\ldots< j_k$, lie in this basis. Hence, $$v_1 \ldots v_n Cl(v_1^{\perp} \cap v_2^{\perp} \cap \ldots \cap v_n^{\perp},f)\cap (\sum_{i=0}^{n-1} \underbrace{V\ldots V}_{i})=(0).$$ Therefore, $a=0$ and $C_{\overline{1}}=(0).$ So, $ C(\mathcal{V})=C_{\overline{0}}$. According to Lemma~\ref{lema 2.2},~\eqref{centr_ort_2} $C_{\overline{0}}= \cap_{i\geq 0} Cl(v_i, f)_{\overline{0}} $.    Thus  Lemma~\ref{chain} implies $C(\mathcal{V})= \mathbb{C}\cdot 1.$
\end{proof}

\begin{proposition}
	\label{prop3_non_is}
	For an arbitrary countable subset $X$ of the algebra $B=Cl(W,g)$ the centralizer of  $X$ in $B$ is different from $\mathbb{C}\cdot 1.$
\end{proposition}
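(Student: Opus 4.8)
The plan is to exploit the fact that $W$ has an uncountable orthogonal basis $\{w_i\}_{i\in I}$, whereas any countable subset $X$ of $B = Cl(W,g)$ is ``supported'' on only countably many of the basis vectors $w_i$. Concretely, first I would observe that each element of $B$ is a finite linear combination of ordered monomials $w_{i_1}\cdots w_{i_k}$, hence involves only finitely many indices from $I$. Taking the union over a countable set $X$, there is a countable subset $I_0 \subseteq I$ such that $X \subseteq Cl(W_0, g)$, where $W_0 = \mathrm{Span}\{w_i : i \in I_0\}$. Since $\mathfrak{Card}\, I$ is uncountable, $I \setminus I_0$ is nonempty; pick any index $i^* \in I \setminus I_0$.

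Next I would check that $w_{i^*}$ centralizes all of $Cl(W_0,g)$. The defining Clifford relations give $w_{i^*} w_j + w_j w_{i^*} = g(w_{i^*}, w_j)\cdot 1$, and because the basis $\{w_i\}$ is $g$-orthogonal (the form $g$ on $W$ is diagonal in this basis, with $g(w_i, w_j) = 0$ for $i \ne j$), we get $w_{i^*} w_j = - w_j w_{i^*}$ for every $j \in I_0$. Therefore, for an ordered monomial $w_{j_1}\cdots w_{j_k}$ with all $j_t \in I_0$, pushing $w_{i^*}$ through it picks up a sign $(-1)^k$. This means $w_{i^*}$ does not itself commute with odd monomials. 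To fix the parity issue, I would instead use the element $w_{i^*}^2 = g(w_{i^*})\cdot 1$ — but that is just a scalar and tells us nothing. The cleaner route is to note that it suffices to produce \emph{some} element of $B$ outside $\mathbb{C}\cdot 1$ commuting with $X$: take two distinct indices $i^*, i^{**} \in I \setminus I_0$ (possible since $I \setminus I_0$ is uncountable, in particular has at least two elements) and set $c = w_{i^*} w_{i^{**}}$. Then for each $j \in I_0$, conjugating $c$ past $w_j$ yields the sign $(-1)^2 = 1$, so $c$ commutes with every ordered monomial supported on $I_0$, hence with all of $Cl(W_0,g) \supseteq X$. Moreover $c \notin \mathbb{C}\cdot 1$ because $w_{i^*} w_{i^{**}}$ is a nontrivial ordered basis monomial of the Clifford algebra (here one uses that $i^* \ne i^{**}$ and that, by nondegeneracy of $g$, $w_{i^*}, w_{i^{**}}$ are genuine nonzero anticommuting generators). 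Hence the centralizer of $X$ in $B$ strictly contains $\mathbb{C}\cdot 1$.

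The main obstacle, and the place requiring care, is the parity bookkeeping just described: a single extra basis vector $w_{i^*}$ anticommutes rather than commutes with odd-degree elements of $Cl(W_0,g)$, so one must take a product of an even number of ``fresh'' generators, which is exactly why uncountability of $I$ (giving us room for \emph{two} spare indices) is used. A secondary point to verify is that $w_{i^*} w_{i^{**}}$ really is not a scalar; this follows immediately from the PBW-type basis of the Clifford algebra recalled earlier in the paper (ordered products of basis vectors, together with $1$, form a basis), together with the nondegeneracy assumption on $g$ which guarantees the generators $w_i$ are nonzero and pairwise independent. With those two checks in place the proof is complete, and it should run to only a few lines once the basis-support observation is stated.
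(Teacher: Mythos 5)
Your proof is correct and matches the paper's argument: both use the countability of $X$ to find a countable $I_0 \subset I$ with $X \subseteq Cl(W_0,g)$, $W_0 = \mathrm{Span}\{w_i : i \in I_0\}$, and then exhibit $w_{i^*}w_{i^{**}}$ for two spare indices $i^*, i^{**} \in I \setminus I_0$ as a non-scalar element of the centralizer. The parity issue you flag (needing an even product of fresh generators) is exactly why the paper also takes a two-fold product $w_iw_j$ rather than a single generator.
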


\begin{proof}
	Let $X$ be a countable subset of the algebra $B=Cl(W,g).$    There exists a countable subset $I_0\subset I$, such that $X\subset Cl(W_0,g),$ where $W_0=\text{Span}(w_i, i\in I_0).$ So, for any indexes $i,$ $j \in I\setminus I_0,$ $i\neq j,$ the element $w_iw_j$ belongs the centralizer of $X.$
\end{proof} 

\begin{proof}[Proof of Theorem \ref{nonizom}]
By Proposition \ref{prop2_non} the centralizer of the set $\mathcal{V} $ in the Clifford algebra $A$ is $\mathbb{C}\cdot 1.$	But by Proposition \ref{prop3_non_is} the centralizer of  $X$ in the Clifford algebra $B$ is different from $\mathbb{C}\cdot 1$ for an arbitrary countable subset $X$ of $B$. Therefore, 	$A$ and $B$ are not isomorphic.   
\end{proof} 

\section{Acknowledgement}
We are  grateful to E. Zelmanov for helpful discussions and   valuable comments.


\begin{thebibliography}{99}
	
	
	
	
\bibitem{Baranov2} A.~A. Baranov, Classification of the direct limits of involution simple associative algebras and the corresponding dimension groups, {\em Journal of Algebra}, {\bf 381} (2013) 73--95.

\bibitem{Baranov1} A.~A. Baranov, A.~G~.Zhilinskii, Diagonal direct limits of simple Lie algebras, {\em Commun. in Algebra}, {\bf 27} (1999), no.~6, 2749--2766.

\bibitem{Sushch2} O. Bezushchak, B. Oliynyk, V. Sushchansky, Representation of Steinitz's lattice in lattices of substructures of relational structures, {\em Algebra Discrete Math.}, {\bf 21} (2016), no.~2, 184–-201.

\bibitem{Bokut} 
L. A. Bokut, Yuqun Chen, Gr\"{o}bner–Shirshov bases and their calculation, {\em Bull. Math. Sci.}, {\bf 4} (2014),  no.~3, 325–-395. DOI 10.1007/s13373-014-0054-6


\bibitem{Glimm} J.~G.~Glimm,   On a certain class of operator algebras, {\em Trans. Amer. Math. Soc.}, \textbf{95} (1960) no.~2, 318--340.

\bibitem{Jacobson} N. Jacobson, {\em  Structure and representations of Jordan algebras} (American Mathematical Soc.,  1968).


 \bibitem{Malcev} A.I. Mal'cev,  {\em Algebraic Systems.} B.D. Seckler \& A.P. Doohovskoy (trans.). (Springer-Verlag, New York-Heidelberg, 1973). 


\bibitem{Ramakr} A. Ramakrishnan, {\em  L-matrix theory : or, The grammar of Dirac matrices}  (Tata McGraw-Hill Pub. Co., 1972).



\bibitem{ST} E. Steinitz,  Algebraische Theorie der K{\"o}rper,  {\em J. Reine Angew. Math. }, \textbf{137} (1910) 167--309.


\bibitem{Zall} { A. E. Zalesskii}, Group rings of inductive limits of alternating groups, \emph{Leningrad Mathematical Journal}, \textbf{2} (1991) no.~6, pp.~1287--1303.





\end{thebibliography}
\end{document}